\DeclareMathAlphabet{\mathpzc}{OT1}{pzc}{m}{it}
\newtheorem{theorem}{Theorem}[section]
\newtheorem{lemma}[theorem]{Lemma}
\newtheorem{corollary}[theorem]{Corollary}
\newtheorem{proposition}[theorem]{Proposition}
\theoremstyle{definition}
\newtheorem{example}[theorem]{Example}
\newtheorem{definition}[theorem]{Definition}
\newtheorem{definition-lemma}[theorem]{Definition-Lemma}
\newtheorem{definition-theorem}[theorem]{Definition-Theorem}
\newtheorem{remark}[theorem]{Remark}
\newtheorem*{ack}{Acknowledgements}
\newtheorem*{convention}{Convention}
\title[Twisted bi-symplectic structure]{Twisted bi-symplectic structure on
Koszul twisted Calabi-Yau algebras}
\author[Chen]{Xiaojun Chen}
\address{Department of Mathematics, Sichuan University, Chengdu, 
Sichuan Province, 610064 P.R. China}
\email{xjchen@scu.edu.cn}
\author[Eshmatov]{Alimjon Eshmatov}
\address{Department of Mathematics and Statistics,
University of Toledo, Toledo, OH 43606, USA}
\email{alimjon.eshmatov@utoledo.edu}
\author[Eshmatov]{Farkhod Eshmatov}
\address{AKFA University, Tashkent 100095, Uzbekistan
and V.I. Romanovskiy Institute of Mathematics, Uzbekistan Academy of Sciences, 
Tashkent 100174, Uzbekistan}
\email{olimjon55@hotmail.com}
\author[Liu]{Leilei Liu}
\address{School of Mathematics (Zhuhai), Sun Yat-sen University, Zhuhai, 
Guangdong Province, 519082 P.R. China}
\email{liuleilei@mail.sysu.edu.cn}
\date{}
\begin{document}

\begin{abstract}
For a Koszul Artin-Schelter regular algebra (also called twisted Calabi-Yau algebra), 
we show that it has a ``twisted" bi-symplectic 
structure, which may be viewed as a noncommutative
and twisted analog of the shifted symplectic
structure introduced by Pantev, To\"en, Vaqui\'e and Vezzosi.
This structure gives a quasi-isomorphism between the tangent complex
and the twisted cotangent complex of the algebra,
and may be viewed as a DG enhancement
of Van den Bergh's noncommutative Poincar\'e duality; it
also induces a twisted symplectic 
structure on its derived representation schemes.


\end{abstract}

\maketitle

\setcounter{tocdepth}{1}\tableofcontents


\section{Introduction}

Let $A$ be an Artin-Schelter (AS for short) regular algebra
of dimension $n$. Then, denote by $\sigma$ the Nakayama
automorphism of $A$ and $A^e=A\otimes A^{\mathrm{op}}$
its enveloping algebra. 
Reyes, Rogalski and Zhang \cite{RRZ}
(see also Van den Bergh
\cite{VdBExist} and
Yekutieli and Zhang \cite{YZ}) proved
that there is an isomorphism
\begin{equation}\label{eq:inversedualizing}
\mathrm{RHom}_{A^e}(A, A\otimes A)\cong A_\sigma[n]
\end{equation}
in the derived category $D(A^e)$ of (left) $A^e$-modules,
where $A_\sigma$ is the twisted $A^e$-module whose underlying space
is $A$, with the action of $A^e$ given by $u\circ x\circ v:=\sigma(u)  xv$, 
for all $u, v, x\in A$.
Therefore, due to Van den Bergh \cite{VdB98},
the noncommutative Poincar\'e duality of $A$
reads as
\begin{equation}\label{eq:NCPoincareduality}
\mathrm{HH}^\bullet(A)\cong\mathrm{HH}_{n-\bullet}(A, A_\sigma),
\end{equation}
where $\mathrm{HH}^\bullet(A)$ and $\mathrm{HH}_{n-\bullet}(A, A_\sigma)$
are the Hochschild cohomology and the twisted Hochschild homology of $A$, respectively.
This paper aims to understand \eqref{eq:NCPoincareduality}, especially the role of the 
Nakayama automorphism, from the derived noncommutative geometry point of view.

\subsection{}
Before going to the details, let us look at the case where $\sigma$ is the
identity map. In this case, the AS-regular algebra is nothing but a Calabi-Yau algebra
in the sense of Ginzburg \cite{Ginzburg}
(therefore, an AS-regular algebra is also 
called a {\it twisted} Calabi-Yau algebra in \cite{RRZ}).
Then the isomorphism \eqref{eq:inversedualizing}
may be interpreted as the isomorphism between the (derived) noncommutative 
tangent and cotangent complexes of $A$, which gives rise to a 
{\it shifted noncommutative symplectic structure}
on $A$. The noncommutative symplectic structure was first introduced
by Crawley-Boevey, Etingof and Ginzburg in \cite{CBEG} for associative algebras 
and was called the {\it bi-symplectic structure},
which can easily be
generalized to the category of differential graded (DG) algebras and hence their homotopy category 
(see \cite{CE,Pridham} for some more details); it is also a noncommutative analog 
of the {\it shifted
symplectic structure} introduced by Pantev, To\"en, Vaqui\'e and
Vezzosi in \cite{PTVV}.

Now for an arbitrary AS-regular algebra,
it is very tempting for us to look for a version, possibly twisted, of the bi-symplectic
structure analogous to that of Calabi-Yau algebras; here, by ``twisted", we mean a structure
that encodes the Nakayama automorphism.

For simplicity, let us assume $A$ is also Koszul.
Denote by $A^{\textup{!`}}$ the Koszul dual coalgebra of $A$; then
its cobar construction $R:=\Omega(A^{\textup{!`}})$ is a DG free
cofibrant resolution of $A$. It is usually more convenient to consider 
the noncommutative geometric
structure on $R$ instead of $A$ in derived algebraic geometry 
(c.f. \cite{BFR,BKR,Yeung}). Our first theorem is:

\begin{theorem}\label{thm:firstmain}
Let $A$ be a Koszul AS-regular algebra of dimension $n$, and let $R$ be as above.
Then there exists a closed 2-form
$\bm\omega$ of total degree $n$ in the $\bm{twisted}$ Karoubi-de Rham complex $\widehat{\mathrm{DR}}^2_\sigma R$
of $R$
such that the contraction (see Lemma \ref{lemma:NCcontraction} for the definition)
\begin{equation}\label{iso:derivedtangenttoderivedcotangent}
\mathbf{\Psi}\circ\bm\iota_{(-)}\bm\omega: 
\widehat{ \mathbf D\mathrm{er}}\,R\stackrel{\simeq}\longrightarrow 
\widehat{\mathbf\Omega}^1_{\sigma}R[n-2]
\end{equation}
is an isomorphism of DG $R$-bimodules,
where $\widehat{ \mathbf D\mathrm{er}}\,R$ and $\widehat{\mathbf{\Omega}}^1_{\sigma}R$
are the noncommutative tangent complex and the $\bm{twisted}$
noncommutative cotangent complex of $R$ respectively.
\end{theorem}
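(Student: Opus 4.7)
The plan is to construct $\bm\omega$ from the twisted Frobenius structure carried by the Koszul dual algebra $A^!$, and then to reduce the two required properties (closedness and non-degeneracy of contraction) to standard identities for that pairing.

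Since $A$ is Koszul and AS-regular of dimension $n$, its Koszul dual $A^!$ is a finite-dimensional graded algebra concentrated in degrees $0,\ldots,n$ with one-dimensional top degree, and the isomorphism \eqref{eq:inversedualizing} translates into a \emph{twisted} graded Frobenius structure on $A^!$: a non-degenerate pairing $\langle -,-\rangle\colon A^!\otimes A^!\to k$ of degree $-n$ satisfying the associativity $\langle ab,c\rangle=\langle a,bc\rangle$ together with a twisted symmetry $\langle a,b\rangle=\pm\langle\sigma^!(b),a\rangle$, where $\sigma^!$ is the Koszul-dual Nakayama automorphism. Dualizing the pairing produces a distinguished ``volume'' element $\Theta\in A^{\textup{!`}}\otimes_\sigma A^{\textup{!`}}$ of internal degree $n$ which encodes all of this data.

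Next I would transfer $\Theta$ to a 2-form on $R=\Omega(A^{\textup{!`}})$. Because $R$ is free as a graded algebra on a shift of $\overline{A^{\textup{!`}}}$, the twisted Karoubi-de Rham complex admits an explicit description: an element of $A^{\textup{!`}}\otimes A^{\textup{!`}}$ canonically produces a class in $\widehat{\mathrm{DR}}^2_\sigma R$ by sending each tensor to an expression of the form ``$x\,Dy$'' (with the appropriate shift from cobar) and then cyclicizing in the $\sigma$-twisted sense. The image of $\Theta$ under this construction defines $\bm\omega$, of total degree $n$. Closedness under the Karoubi-de Rham differential is automatic from the construction, while closedness under the internal (cobar) differential of $R$ telescopes precisely into the associativity identity $\langle ab,c\rangle=\langle a,bc\rangle$ for the pairing, with the twisted symmetry being exactly what is required for the terms to survive the passage to the $\sigma$-twisted cyclic quotient.

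For the contraction $\mathbf{\Psi}\circ\bm\iota_{(-)}\bm\omega\colon\widehat{\mathbf D\mathrm{er}}\,R\to\widehat{\mathbf\Omega}^1_\sigma R[n-2]$, a direct computation on the generating partial derivations $\partial_x$ (for $x$ running over a basis of $\overline{A^{\textup{!`}}}$) identifies $\bm\iota_{\partial_x}\bm\omega$, after the identification $\mathbf{\Psi}$, with the Frobenius dual of $x$ viewed as a generator of the twisted cotangent bimodule. Non-degeneracy of $\langle -,-\rangle$ therefore upgrades to bijectivity of the contraction on generators, and the appearance of $\sigma$ on the cotangent side is exactly what matches the twisted symmetry of the pairing. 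Promoting this from generators to the full DG $R$-bimodules is then formal, since both sides are generated over $R^e$ by these generators.

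The main obstacle, and the technical heart of the argument, will be the consistent bookkeeping of $\sigma$ throughout: it enters (i) in the very definitions of $\widehat{\mathrm{DR}}^2_\sigma R$ and $\widehat{\mathbf\Omega}^1_\sigma R$, (ii) in the twisted symmetry of the Frobenius pairing, and (iii) in the Koszul signs from the cobar differential. Verifying that the cobar differential descends to the twisted Karoubi quotient and that $d\bm\omega=0$ holds on the nose requires the compatibility of $\sigma^!$ with the coproduct of $A^{\textup{!`}}$ inherited from Koszul duality applied to $\sigma$, and this compatibility is the place where the AS-regular hypothesis is really used. In the untwisted (Calabi-Yau) case $\sigma=\mathrm{id}$, the whole construction collapses to the classical bi-symplectic structure of \cite{CBEG,Pridham}.
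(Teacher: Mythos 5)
Your proposal follows essentially the same route as the paper: the paper also dualizes the Frobenius pairing on $A^!$ to obtain $\Delta(\eta)=\sum\eta'\otimes\eta''$ in $A^{\textup{!`}}\otimes A^{\textup{!`}}$, sets $\bm\omega=\tfrac{1}{2}\sum d\eta'\otimes d\eta''$ in $\widehat{\mathrm{DR}}^2_\sigma R$, proves $\partial$-closedness by exactly the telescoping into coassociativity plus twisted cyclic symmetry you describe, and identifies the contraction on generators with the Frobenius duality $A^!\to A^{\textup{!`}}[n]$, $u\mapsto\sum u(\eta')\cdot\eta''$, whose bijectivity is the non-degeneracy of the pairing. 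The only point worth making explicit when you write this up is the $\sigma$-invariance of $\bm\omega$ itself (which the paper derives from $\langle a,b\rangle=\langle\sigma(a),\sigma(b)\rangle$), since it is a hypothesis of the contraction lemma.
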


In \S\ref{sect:NCdiffforms}, we shall give more 
details about the notations in the above theorem.
In the theorem, $\bm\omega$ is the twisted version of the shifted
bi-symplectic structure of a Koszul Calabi-Yau algebra and hence is called
a {\it twisted bi-symplectic structure}.
Moreover, the induced isomorphism on the
commutator quotient spaces on both sides of 
\eqref{iso:derivedtangenttoderivedcotangent}
is precisely Van den Bergh's
noncommutative Poincar\'e duality \eqref{eq:NCPoincareduality}.
In this sense, we may view \eqref{iso:derivedtangenttoderivedcotangent}
as a derived enhancement of \eqref{eq:NCPoincareduality}.

\subsection{}
 A guiding principle in the study of noncommutative geometry is via representations 
 due to Kontsevich and Rosenberg \cite{KR}.
It roughly says that a noncommutative geometric structure
on a noncommutative space (in this work, we mean an associative algebra),
if it exists, should induce its classical counterpart
on the affine schemes of its representations (called the {\it representation schemes}).
This principle has achieved much success,
for example, in the study of noncommutative Poisson
geometry and noncommutative symplectic geometry \cite{CB,CBEG,VdBDP}.

Later this principle is generalized
to the derived/homotopy setting by Berest, Khachatryan and Ramadoss in \cite{BKR}.
Such a generalization is highly nontrivial,
as it solves many issues 
that appear in the calculations of representation schemes
and reveals many new interesting structures. 
Going back to the Koszul AS-regular algebra case, we have

\begin{theorem}\label{thm:secondmain}
Let $A$ be a Koszul AS-regular algebra.
Denote by $\mathpzc{DRep}_V(A)$ 
the derived moduli stack
of representations of $A$ in a vector space $V$ (see \S\ref{sect:VdB} below
for more details). 
Then the twisted
bi-symplectic structure of $A$ induces a twisted
symplectic structure 
on $\mathpzc{DRep}_V(A)$.
\end{theorem}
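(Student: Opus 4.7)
The plan is to apply the derived Kontsevich-Rosenberg principle of Berest, Khachatryan and Ramadoss \cite{BKR} and transfer the twisted bi-symplectic structure from $R=\Omega(A^{\textup{!`}})$ along the representation functor to its image on the commutative DG algebra $R_V$ modelling $\mathpzc{DRep}_V(A)$. Since $R$ is a cofibrant resolution of $A$, the commutative DG algebra $R_V$ is itself cofibrant and represents the derived moduli stack; this is the crucial input that allows the noncommutative-to-commutative transfer to remain a quasi-isomorphism at the derived level.

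First I would construct the appropriate trace chain map from the twisted Karoubi-de Rham complex $\widehat{\mathrm{DR}}^\bullet_\sigma R$ to a $\mathrm{GL}(V)$-equivariant twisted de Rham complex of $R_V$. The underlying construction is the standard one of Crawley-Boevey-Etingof-Ginzburg \cite{CBEG} and Van den Bergh \cite{VdBDP}, extended to DG algebras in \cite{BKR}, but with the $\sigma$-twist on the noncommutative side translated into twisting by the induced automorphism $\sigma_V$ of $R_V$ on the representation side. Applied to $\bm\omega$, this produces a closed 2-form $\omega_V$ of total degree $n$ on $\mathpzc{DRep}_V(A)$; closedness is automatic because the trace is a chain map for the de Rham differential.

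For non-degeneracy, I would apply $(-)_V$ to the quasi-isomorphism \eqref{iso:derivedtangenttoderivedcotangent}. Under the standard dictionary, the representation functor sends the noncommutative tangent complex $\widehat{\mathbf D\mathrm{er}}\,R$ to the tangent complex of $R_V$ and the twisted noncommutative cotangent complex $\widehat{\mathbf\Omega}^1_{\sigma}R$ to the $\sigma_V$-twisted cotangent complex of $R_V$. The map $\bm\Psi\circ\bm\iota_{(-)}\bm\omega$ then becomes, up to the $[n-2]$ shift, the contraction-with-$\omega_V$ map on the representation side. Since $(-)_V$ is exact in the relevant derived sense and the source is a quasi-isomorphism, so is the image, which is precisely the non-degeneracy condition for a twisted shifted symplectic structure on the stack.

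The main obstacle is not the formal transfer but the setup: making precise what a twisted symplectic structure on $\mathpzc{DRep}_V(A)$ should mean, so that the image of $\widehat{\mathrm{DR}}^2_\sigma R$ lands naturally in it, and verifying $\mathrm{GL}(V)$-equivariance throughout so that the 2-form descends to the stack quotient. Building this parallel twisted-equivariant de Rham theory on DG representation schemes, and checking that the trace map is simultaneously compatible with the Nakayama twist and with the $\mathrm{GL}(V)$-action, is the technical heart of the argument; once it is in place, the theorem follows by applying $(-)_V$ to Theorem \ref{thm:firstmain}.
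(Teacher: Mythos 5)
Your proposal follows essentially the same route as the paper: the authors also define $\omega$ as the image of $\bm\omega$ under the derived Procesi trace map, identify the (twisted) tangent and cotangent complexes of $\mathpzc{DRep}_V(A)$ as the $\mathrm{GL}(V)$-invariants of the Van den Bergh images of $\widehat{\mathbf D\mathrm{er}}\,R[1]$ and $\widehat{\mathbf\Omega}^1_\sigma R[-1]$, and obtain non-degeneracy by transferring the quasi-isomorphism of Theorem \ref{thm:firstmain} through $(-)_V$ (which works because both bimodules are DG free, so the functor preserves the quasi-isomorphism). The paper simply fills in the ``standard dictionary'' you invoke with an explicit generator-level computation verifying that the contraction with $\omega$ is a chain map and is non-degenerate via the Frobenius pairing on $A^{\textup{!`}}$ and the trace pairing on matrices.
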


Here, the {\it twisted symplectic structure}, parallel
to the noncommutative case, 
gives a quasi-isomorphism, up to degree shifting, between the tangent
and twisted cotangent complexes of $\mathpzc{DRep}_V(A)$.
It is the twisted
version of the shifted symplectic structure introduced
in \cite{PTVV} and also generalizes
some previous results obtained in \cite{CE,Pridham,Yeung}
for Calabi-Yau algebras.

The rest of the paper is devoted to proving the above two theorems.
It is organized as follows: 
in \S\ref{sect:NCdiffforms}, we first recall the noncommutative tangent and cotangent complexes
as well as their twisted version for DG algebras, and then introduce
the notion of twisted bi-symplectic structure;
in \S\ref{sect:ASalgebra}, we first recall the definition of AS-regular algebras
and then prove Theorem \ref{thm:firstmain};
in \S\ref{sect:DRep}, we briefly recall Berest et al.'s construction
of derived representation schemes, and in particular, study the differential forms
on them; in the last section, \S\ref{sect:applications}, after introducing
the twisted differential forms and twisted symplectic structure 
for DG commutative algebras, 
we prove Theorem \ref{thm:secondmain}.

\begin{ack}
This work is partially supported by NSFC
(No. 11671281, 11890660 and 11890663).
We are grateful to Youming Chen, Song Yang and Xiangdong Yang
for helpful conversations, and to the anonymous referee for 
suggestions which improve the presentation of the paper.
\end{ack}

\begin{convention}
Throughout the paper, $k$ is a field of characteristic zero. All algebras are unital over $k$,
and all tensors and homomorphisms are over $k$ unless
otherwise specified. Boldface letters such as $\mathbf D\mathrm{er}(-)$
and $\mathbf{\Omega}^\bullet(-)$ mean the corresponding noncommutative structures
for DG algebras while the usual letters such as $\mathrm D\mathrm{er}(-)$
and $\Omega^\bullet(-)$
mean those for DG commutative algebras.
\end{convention}

\section{Noncommutative differential calculus}\label{sect:NCdiffforms}

In this section we study the twisted analogue of the noncommutative
bi-symplectic structure;
its untwisted version was first introduced by
Crawley-Boevey, Etingof and Ginzburg 
in \cite{CBEG}; see also \cite{CE,Pridham} for the DG/derived case.

\subsection{Noncommutative differential forms}
Suppose $(R,\mu, \partial)$ is a
DG $k$-algebra, where $\mu$ is the multiplication and $\partial$
is the differential of degree $-1$.
The set of noncommutative 1-forms of $R$,
denoted by $\mathbf{\Omega}^1 R$,
is 
$$
\mathbb{\mathbf{\Omega}}^1 R:=\mathrm{ker}\{\mu:R\otimes R\to R\}.
$$
It is a DG $R$-bimodule generated by $r\otimes 1-1\otimes r\in R\otimes R$
for all $r\in R$, with the differential induced from $R$
and still denoted by $\partial$. 
Equivalently, $\mathbf{\Omega}^1 R$ is the $R$-bimodule
generated by $dr$, for all $r\in R$, 
subject to the following relations:
$d(r_1r_2)=(dr_1)r_2+r_1(dr_2)$, for all $r_1, r_2\in R$.
The identification of these two $R$-bimodules is given by
$$r\otimes 1-1\otimes r\mapsto dr.$$

In what follows, 
we always raise the degree of 1-forms up by 
1; in other words, by the set of noncommutative 1-forms of $R$ we shall mean
$\mathbf{\Omega}^1 R[-1]$ (also denoted by $\Sigma\mathbf{\Omega}^1 R$).
Elements like $r\otimes 1-1\otimes r\in\mathbf{\Omega}^1 R$ is now identified with $\Sigma (r\otimes 1
-1\otimes r)\in \mathbf{\Omega}^1 R[-1]$. With this convention,
the de Rham differential
$$d: R\to \mathbf{\Omega}^1 R[-1],\, r\mapsto \Sigma dr$$
has degree 1, and satisfies $d\circ \partial+\partial\circ d=0$.

\begin{remark}\label{remark:Yeungsaugmentation}
It is suggested by Yeung \cite{Yeung} that in practice it is better to take the
noncommutative 1-forms to be the cone
of the inclusion
$
i:\mathbf{\Omega}^1  R\to R\otimes R
$. 
Formally, we may view $R\otimes R$ as the set of noncommutative 1-forms
generated by $d\mathbf 1$, where $\mathbf 1$ is the image of $1\in k$ under the unit map.
Taking the degree shifting also into account, we denote
this set of augmented noncommutative 1-forms
by
$\widehat{\mathbf{\Omega}}^1 R[-1]$; more precisely,
\begin{equation}\label{eq:Yeungsnc1forms}
\widehat{\mathbf{\Omega}}^1 R[-1]=\mbox{cone}\{\mathbf{\Omega}^1  R\to R\otimes R\}[-1].
\end{equation}
In what follows, we adopt this convention,
which will be convenient for us to study the equivariant differential forms on
the representation schemes of $R$ 
(see Theorem \ref{thm:tangentandcotangentonDRep}).
\end{remark}

The set of {\it noncommutative de Rham forms} of $R$, denoted by $\widehat{\mathbf{\Omega}}^\bullet R$, 
is the tensor algebra
$$\widehat{\mathbf{\Omega}}^\bullet R:=T_R(\widehat{\mathbf{\Omega}}^1 R[-1])$$
with $d$ and $\partial$ extended to it by derivation and by letting $d^2=0$.
The triple $(\widehat{\mathbf{\Omega}}^\bullet R,\partial, d)$ is a mixed DG algebra.

\subsection{Double derivations}

According to \cite{CBEG,VdBDP}, the noncommutative vector fields
on a DG associative algebra are given by the {\it double derivations}.
By definition,
the space of double derivations $ \mathbf D\mathrm{er}\, R$ of $R$
is the set of derivations
$\mathrm{Der}(R, R\otimes R)$,
where the $R$-bimodule structure on $R\otimes R$ is 
the {\it outer $R$-bimodule} structure, given by
$$
u\circ (x\otimes y)\circ v:=ux\otimes yv,
$$
 for any $x, y, u, v\in R$.
Since the map $R\to \mathbf{\Omega}^1 R, x\mapsto dx$
is the universal derivation, meaning that every derivation of $R$ factors through 
$\mathbf{\Omega}^1 R$,
we have that
$$
 \mathbf D\mathrm{er}\, R\cong\mathrm{Hom}_{R^e}(\mathbf{\Omega}^1  R, R\otimes R).
$$
Observe that $R\otimes R$ also has an {\it inner $R$-bimoule} structure, which is given
by
\begin{equation}\label{eq:innerbimodulestr}
u * (x\otimes y)* v:=(-1)^{|u||x|+|v||y|+|u||v|}xv\otimes uy.
\end{equation}
With the inner $R$-bimodule structure on $R\otimes R$,
$ \mathbf D\mathrm{er}\, R$
is a DG $R$-bimodule.

Analogously to the 1-forms case (see Remark \ref{remark:Yeungsaugmentation}),
in what follows, we take the space of noncommutative vector fields to be
$$\widehat{ \mathbf D\mathrm{er}}\, R[1]:=\mathrm{Hom}_{R^e}(\widehat{\mathbf{\Omega}}^1 R[-1],
R\otimes R)$$ and let
$
T_R(\widehat{ \mathbf D\mathrm{er}}\, R[1])
$
be the tensor algebra generated by $\widehat{ \mathbf D\mathrm{er}}\, R[1]$
over $R$, which is called the space of {\it (noncommutative) polyvector fields} of $R$.

In what follows, we also call
$\widehat{\mathbf\Omega}^1 R[-1]$ and
$\widehat{\mathbf D\mathrm{er}}\, R[1]$
the noncommutative tangent 
and cotangent complexes of $R$ respectively.

\subsection{Twisted differential forms}
In the literature, an associative algebra together with
an automorphism is called a {\it twisted algebra}.
Twisted differential forms
and twisted derivations for twisted algebras
have been studied by many people; see, for example,
\cite{Andre,KSA,LSQ} and references therein.
In what follows, we briefly recall these concepts
for DG algebras.

Suppose $R$ is a DG algebra with an automorphism
$\sigma\in\mathrm{Aut}\, R$. 
The set of {\it twisted} noncommutative differential 1-forms
of $R$ is
\begin{equation}\label{def:twistednoncommdiffforms}
\mathbf{\Omega}^1_\sigma R
:= \ker\{\mu_\sigma: R\otimes R\to R,\, a\otimes b\mapsto a \sigma(b)\}.
\end{equation}
Here $\mu_\sigma$ is called the {\it twisted product} of $R$.
There is a close relationship between $\mathbf{\Omega}^1 R$ and
$\mathbf{\Omega}_\sigma^1 R$. To see this, let us
first give $\mathbf{\Omega}_\sigma^1 R$ an $R$-bimodule structure as follows:
\begin{equation}\label{eq:newtwistedbimodulestr}
u\circ\big(\sum_i x_i\otimes y_i\big)\circ v:=\sum_i\sigma(u) x_i\otimes y_i v,\quad
\mbox{for all}\, u, v\in R.
\end{equation}
Then it is straightforward to see that
the following map
\begin{equation}\label{def:Psi}
\mathbf\Psi: \mathbf{\Omega}^1 R\to 
\mathbf{\Omega}_\sigma^1 R,\, \sum_i x_i\otimes y_i\mapsto\sum_i\sigma(x_i)\otimes y_i
\end{equation}
is a map of DG $R$-bimodules,
where $\mathbf{\Omega}_\sigma^1 R$
is equipped with the above {\it twisted} $R$-bimodule structure 
\eqref{eq:newtwistedbimodulestr}.

Parallel to Remark \ref{remark:Yeungsaugmentation}, in what follows we 
take the twisted noncommutative 1-forms of $R$ to be
$$\widehat{\mathbf{\Omega}}^1_\sigma R[-1]:=\mbox{cone}\{\mathbf{\Omega}^1_\sigma R\rightarrow R\otimes R\}[-1],$$
where $R\otimes R$ is equipped with the twisted outer $R$-bimodule structure the same as
\eqref{eq:newtwistedbimodulestr}.
There is a natural map
\begin{equation}\label{def:twisteddifferential}
d_\sigma: R\to \mathbf{\Omega}_\sigma^1 R[-1]\subset
\widehat{\mathbf{\Omega}}^1_\sigma R[-1],\, a\mapsto \Sigma\big(\sigma(a)\otimes 1-1\otimes a\big),
\end{equation}
called the {\it twisted de Rham differential} of $R$. It satisfies the following {\it twisted
Leibniz rule}
$$d_\sigma(ab)=(d_\sigma a)\cdot b+(-1)^{|a|}\sigma(a)\cdot (d_\sigma b),
\quad\mbox{for all}\, a, b\in R.$$
Moreover, we also have $d_\sigma\circ\partial+\partial\circ d_\sigma=0: R\to\widehat{\mathbf{\Omega}}_\sigma^1 R[-1]$.

\subsection{The Karoubi-de Rham complex}

Recall that 
the {\it Karoubi-de Rham complex} of $R$, denoted by $\widehat{\mathrm{DR}}^\bullet R$,  
is $$\widehat{\mathrm{DR}}^\bullet R:=
\widehat{\mathbf{\Omega}}^\bullet R_\natural
:=\widehat{\mathbf{\Omega}}^\bullet R/[\widehat{\mathbf{\Omega}}^\bullet R,
\widehat{\mathbf{\Omega}}^\bullet R],$$
where $[-,-]$ means the super-commutator subspace.
It is equipped with two differentials $\partial$ and $d$ 
induced from $\widehat{\mathbf{\Omega}}^\bullet R$ (see, for example, \cite{CBEG}),
and hence forms a mixed complex.


Now suppose $(R, \sigma)$ is a twisted algebra.
Let
$$\widehat{\mathrm{DR}}_\sigma^\bullet R:=
\widehat{\mathbf{\Omega}}^\bullet R/\{xy-(-1)^{|x||y|}y
\sigma(x),\;\mbox{where}\; x,y\in\widehat{\mathbf{\Omega}}^\bullet R\},$$
which we would call the {\it twisted} Karoubi-de Rham complex of $R$.
Here $\sigma: \widehat{\mathbf{\Omega}}^\bullet R\to\widehat{\mathbf{\Omega}}^\bullet R$
is the automorphism which extends $\sigma: R\to R$.
It is related to the so-called {\it twisted} Hochschild homology of $R$.
In the literature, the notion of twisted Hochschild cohomology
was first introduced by Kustermans, Murphy and Tuset
in \cite{KMT}.
Soon after that,  Hadfield and Kr\"ahmer studied
the twisted Hochschild homology and computed
them for some twisted algebras in \cite{HK}.
They were used by Brown and Zhang
in the study of some twisted algebras
such as the AS-regular algebras in \cite{BZ}.
By definition, the twisted Hochschild homology of a twisted algebra $(A,\sigma)$
is the Hochschild homology of $A$ with values in $A_\sigma$.
The following theorem gives
a relationship between the twisted differential forms
and the twisted Hochschild homology.

\begin{theorem}\label{thm:relationstoHochschild}
Suppose $R$ is a DG free algebra with $\sigma\in\mathrm{Aut}\, R$. Then
\begin{equation}\label{eq:twistedHHKoszul}
\widehat{\mathrm{DR}}_{\sigma}^1 R\simeq
\overline{\mathrm{CH}}_\bullet(R, R_\sigma),
\end{equation}
where $\overline{\mathrm{CH}}_\bullet(R, R_\sigma)$
is the 
reduced twisted
Hochschild chain complex of $R$.
In particular, if $R\stackrel{\simeq}\to A$ is a DG free resolution of $A$,
then
$$
\widehat{\mathrm{DR}}_{\sigma}^1 R\simeq\overline{\mathrm{CH}}_\bullet(A, A_\sigma).
$$
\end{theorem}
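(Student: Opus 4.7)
The plan is to exploit the two-term cofibrant bimodule resolution of $R$ that is available because $R$ is DG free, apply $R_\sigma \otimes^{\mathbb L}_{R^e}(-)$, and identify the outcome with $\widehat{\mathrm{DR}}^1_\sigma R$ through the twisting isomorphism $\mathbf{\Psi}$ of \eqref{def:Psi}.

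First, since $R = T_k V$ is DG free, one has the canonical identification $\mathbf{\Omega}^1 R \cong R \otimes V \otimes R$ of DG $R^e$-modules (with outer action), so $\mathbf{\Omega}^1 R$ is a cofibrant $R^e$-module. The short exact sequence
$$0 \longrightarrow \mathbf{\Omega}^1 R \longrightarrow R \otimes R \xrightarrow{\;\mu\;} R \longrightarrow 0$$
is therefore a cofibrant two-term resolution of $R$ as an $R^e$-module. Derived-tensoring with $R_\sigma$ over $R^e$ computes $\overline{\mathrm{CH}}_\bullet(R, R_\sigma)$, so the latter is quasi-isomorphic to the explicit two-term complex
$$\bigl[\,R_\sigma \otimes_{R^e} \mathbf{\Omega}^1 R \longrightarrow R_\sigma \otimes_{R^e} (R \otimes R)\,\bigr].$$

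Next, I would identify this two-term complex with $\widehat{\mathrm{DR}}^1_\sigma R$ term by term. Unpacking $R_\sigma \otimes_{R^e} N$ as the coinvariants of $N$ under the $\sigma$-twisted super-commutator action of $R$, and transporting along $\mathbf{\Psi}$ to switch between the outer action on $\mathbf{\Omega}^1 R$ and the twisted structure \eqref{eq:newtwistedbimodulestr} on $\mathbf{\Omega}^1_\sigma R$, a generator-level check shows that these coinvariants are exactly the relations $xy - (-1)^{|x||y|}y\sigma(x)$ used in the definition of $\widehat{\mathrm{DR}}^1_\sigma R$. Similarly, $R_\sigma \otimes_{R^e}(R \otimes R)$ matches the augmentation summand in the cone \eqref{eq:Yeungsnc1forms}, with the $[-1]$ shift accounting for the passage from the full Hochschild complex to its reduced version. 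Assembling the two terms yields the desired quasi-isomorphism $\widehat{\mathrm{DR}}^1_\sigma R \simeq \overline{\mathrm{CH}}_\bullet(R, R_\sigma)$. For the ``in particular'' assertion, a DG free resolution $R \xrightarrow{\simeq} A$ is cofibrant, so the Nakayama automorphism of $A$ lifts to an automorphism of $R$ (still denoted $\sigma$) and the quasi-isomorphism $R \to A$ upgrades to a quasi-isomorphism $R_\sigma \simeq A_\sigma$ of $R^e$-modules; derived Hochschild homology is invariant under such cofibrant replacement, giving $\overline{\mathrm{CH}}_\bullet(R, R_\sigma) \simeq \overline{\mathrm{CH}}_\bullet(A, A_\sigma)$ and hence the second claim.

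The main obstacle will be the careful bookkeeping of the three $\sigma$-twists that appear in the setup: the twist in the coefficient bimodule $R_\sigma$, the twist in the bimodule structure \eqref{eq:newtwistedbimodulestr} on $\mathbf{\Omega}^1_\sigma R$, and the twist in the Karoubi--de Rham quotient relation $xy - (-1)^{|x||y|}y\sigma(x)$. Verifying that all three, together with the Koszul signs, combine coherently so that the super-commutator identification on generators holds verbatim (rather than producing a stray $\sigma^{\pm 1}$) is routine but is where all the conventions must be made to lock into place.
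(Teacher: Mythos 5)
Your argument is correct, but it takes a genuinely different route from the paper's. The paper proves \eqref{eq:twistedHHKoszul} by identifying $\widehat{\mathbf{\Omega}}^1_\sigma R[-1]$ with $R\otimes\tilde V\otimes R$, where $\tilde V=V\oplus k$ carries the Koszul dual $A_\infty$-coalgebra structure, so that the twisted commutator quotient becomes $\overline{\mathrm{CH}}_\bullet(\tilde V,\tilde V_\sigma)$, and then invokes the $A_\infty$-Koszul duality theorem \cite[Theorem 1.2]{BB} to pass to $\overline{\mathrm{CH}}_\bullet(R,R_\sigma)$; the ``in particular'' claim is obtained by filtering the reduced complexes by tensor length and comparing spectral sequences. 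You instead take the canonical two-term free bimodule resolution $0\to\mathbf{\Omega}^1 R\to R\otimes R\to R\to 0$, compute $R_\sigma\otimes^{\mathbb L}_{R^e}R$ directly, and identify the $\sigma$-twisted coinvariants with the twisted Karoubi--de Rham quotient via $\mathbf\Psi$. Your route is more elementary and self-contained (no $A_\infty$-Koszul duality input), while the paper's route produces the intermediate small model $R\otimes\tilde V\otimes R$ and the identification with $\overline{\mathrm{CH}}_\bullet(\tilde V,\tilde V_\sigma)$, which is the form of the statement actually exploited in \S\ref{sect:ASalgebra}. Two points to tighten in your write-up: (i) for the two-term complex to compute the derived tensor product you need $\mathbf{\Omega}^1 R\cong R\otimes V\otimes R$ to be semi-free as a DG bimodule, which holds when $R$ is cofibrant (as the cobar construction is) but not for an arbitrary algebra that is merely free as a graded algebra --- the same hypothesis is implicit in the paper; (ii) the shift $[-1]$ in \eqref{eq:Yeungsnc1forms} is purely a form-degree convention and is not what accounts for the passage from the full to the reduced Hochschild complex --- that passage comes from comparing the small resolution with the normalized bar resolution.
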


Observe that in the above theorem, $\sigma$ also induces an automorphism of $A$,
so $A_\sigma$ makes sense. Moreover, if $\sigma$ is the identity map,
then the above quasi-isomorphism becomes
$$
\widehat{\mathrm{DR}}^1_\sigma R
=\widehat{\mathrm{DR}}^1 R\simeq\overline{\mathrm{CH}}_\bullet(R)\simeq\overline{\mathrm{CH}}_\bullet(A),
$$
whose proof can be found in \cite[Propositions 3.5]{CE}.
The proof for the twisted case is similar, 
so here we only give
a sketch of it, 
just for the sake of completeness; see also Liu \cite{Liu} for some similar discussions.

\begin{proof}[Proof of Theorem \ref{thm:relationstoHochschild}]
Suppose $R=k\langle x_1,\cdots, x_n\rangle$ with differential $\partial$.
If we denote $V:=\mathrm{Span}_k\{x_1,\cdots, x_n\}$, 
then $V\oplus k$ has a counital, $A_\infty$-coalgebra structure,
which is the Koszul dual coalgebra of $R$ (see \cite[\S1]{BB} for more details), 
and is denoted by $\tilde V$.
Now equip $R\otimes \tilde V\otimes R$ with the following differential $b$:
suppose the $A_\infty$-coproduct 
$\Delta_m (v)=\sum v_1\otimes\cdots\otimes v_m$, for $v\in \tilde V$,
then for $r_1\otimes v\otimes r_2\in R\otimes\tilde V\otimes R$,
\begin{align*}
b(r_1\otimes v\otimes r_2)=
&\partial(r_1)\otimes v\otimes r_2
+(-1)^{|r_1|+|v|}r_1\otimes v\otimes\partial(r_2)\\
&+\sum_m\sum_i (-1)^{|r_1|+|r_1|+\cdots+|r_{i-1}|}r_1\bar v_1\cdots \bar v_{i-1}\otimes v_i\otimes
\sigma(\bar v_{i+1}\cdots\bar v_m) r_2,
\end{align*}
where $\bar v_i$ is the image of $v\in\tilde V$ in $V$.
It is direct to see
$$\widehat{\mathbf{\Omega}}^1_{\sigma}R[-1]
=\mathrm{cone}\{\mathbf{\Omega}^1_{\sigma} R\to R\otimes R\}[-1]
\cong R\otimes \tilde V\otimes R,$$
and therefore we get
\begin{equation}\label{eq:qistwistedHH}
\widehat{\mathrm{DR}}^1_\sigma R
\cong\overline{\mathrm{CH}}_\bullet(\tilde V, \tilde V_\sigma).
\end{equation}
By Koszul duality theory for $A_\infty$-algebras (\cite[Theorem 1.2]{BB}) we have
\begin{equation}\label{eq:qistwistedHH2}
\overline{\mathrm{CH}}_\bullet(\tilde V, \tilde V_\sigma)\simeq
\overline{\mathrm{CH}}_\bullet(R, R_\sigma).
\end{equation}
(In loc. cit. the authors did not consider the twisted Hochschild homology, but this is straightforward.)
Thus combining \eqref{eq:qistwistedHH} and \eqref{eq:qistwistedHH2} we get
$$\widehat{\mathrm{DR}}^1_\sigma R\simeq\overline{\mathrm{CH}}_\bullet(R, R_\sigma).$$
Now suppose $R\stackrel{\simeq}\to A$ is a DG free resolution.
Then it induces a chain map $\overline{\mathrm{CH}}_\bullet(R, R_\sigma)\to
\overline{\mathrm{CH}}_\bullet(A, A_\sigma)$.
Filtering both complexes with respect to the number of components, we get
$$\overline{\mathrm{CH}}_\bullet(R, R_\sigma)\simeq\overline{\mathrm{CH}}_\bullet(A, A_\sigma)$$
by comparing the associated spectral sequences.
Combining it with the above quasi-isomorphism \eqref{eq:twistedHHKoszul}, 
we get the desired result.
\end{proof}

In \cite[Proposition 3.7]{CE} it is also proved that
\begin{equation}\label{eq:commutatorquotientofdoublederivation}
(\widehat{\mathbf D\mathrm{er}}\, R[1])_\natural
:=\widehat{\mathbf D\mathrm{er}}\, R[1]/\big[R,\widehat{\mathbf D\mathrm{er}}\, R[1]\big]
\simeq\overline{\mathrm{CH}}^\bullet(R)\simeq
\overline{\mathrm{CH}}^\bullet(A).
\end{equation}

\subsection{Twisted bi-symplectic structure}

From now on, we assume $R$ is a DG free algebra with $\sigma\in\mathrm{Aut}\, R$.
We introduce the {\it twisted} noncommutative contraction
of double derivations with differential forms, which is a generalization of
the noncommutative contraction introduced in \cite{CBEG}.

First, for any 
$\Theta\in\widehat{ \mathbf D\mathrm{er}}\, R$
and $a\in R$, in what follows
we write $\Theta(a)\in R\otimes R$ in the form
$\sum\Theta(a)'\otimes\Theta(a)''$.

Now for
$\bm\omega\in\widehat{\mathbf{\Omega}}^\bullet R$,
we say it is {\it $\sigma$-invariant} if $\sigma(\bm\omega)=\bm\omega$.
Suppose $\bm\omega=d a_1 d a_2\cdots d a_n\in \widehat{\bm\Omega}^n R$ is $\sigma$-invariant,
then
the {\it twisted contraction}
of $\Theta$ with $\bm\omega$ is given by
\begin{eqnarray*}
\bm\iota_{\Theta}\bm\omega:=\sum_{i=1}^n\sum (-1)^{\sigma_n}\Theta(a_i)''da_{i+1}\cdots da_n\sigma(da_1)\cdots\sigma(da_{i-1})
\sigma(\Theta(a_1)'),
\end{eqnarray*}
where $(-1)^{\sigma_n}$ is the Koszul sign.
We thus get an $R$-bilinear map 
$$\bm\iota_{\Theta}(-): \widehat{\mathbf{\Omega}}^\bullet R
\to \widehat{\mathbf{\Omega}}^{\bullet-1} R,\; \bm\omega\mapsto\bm\iota_{\Theta}\bm\omega.
$$ 
It is straightforward to see that $\bm\iota_{\Theta}(-)$ 
vanishes on the commutator subspace
$$\{xy-(-1)^{|x||y|}y
\sigma(x),\;\mbox{where}\; x,y\in\widehat{\mathbf{\Omega}}^\bullet R\}$$
and hence descends to a well-defined map
$$
\bm\iota_{\Theta}(-): \widehat{\mathrm{DR}}^\bullet_\sigma R\to \widehat{\mathbf{\Omega}}^{\bullet-1} R.
$$

Now recall that 
$\widehat{\mathrm{DR}}_\sigma^\bullet R$ is a mixed
complex with two differentials $\partial$ and $d$;
we say that $\bm\omega\in \widehat{\mathrm{DR}}_\sigma^\bullet R$
{\it extends to be a closed form} in the negative cyclic complex associated to
$\widehat{\mathrm{DR}}_\sigma^\bullet R$
if there exists a sequence 
$\bm\omega_0=\bm\omega,\bm\omega_1,\bm\omega_2,\cdots$
such that
$$\partial\bm\omega_0=0,\quad d\bm\omega_i=\partial\bm\omega_{i+1}\;\;
\mbox{for all}\; i=0,1,2,\cdots.$$

\begin{lemma}\label{lemma:NCcontraction}
If $\bm\omega\in\widehat{\mathrm{DR}}^2_\sigma R$ is $\sigma$-invariant and
extends to be a closed 2-form, then
the following map
\begin{equation}\label{eq:contractionwitha2form}
\bm\Psi\circ\bm\iota_{(-)}\bm\omega:\widehat{ \mathbf D\mathrm{er}}\, R
\to\widehat{\mathbf{\Omega}}_\sigma^1 R[n-2],\,\Theta\mapsto
\bm\Psi(\bm\iota_{\Theta}\bm\omega),
\end{equation}
where $\bm\Psi$ is given by \eqref{def:Psi},
is a map of DG $R$-bimodules.
\end{lemma}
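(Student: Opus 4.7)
The plan is to verify two things: $R$-bilinearity of $\mathbf\Psi\circ\bm\iota_{(-)}\bm\omega$ with respect to the inner bimodule structure on $\widehat{\mathbf D\mathrm{er}}\,R$ and the twisted bimodule structure on $\widehat{\mathbf\Omega}^1_\sigma R$, and compatibility with the differential $\partial$. The well-definedness (that $\bm\iota_\Theta\bm\omega$ descends from $\widehat{\mathbf\Omega}^\bullet R$ to a map out of the twisted Karoubi--de Rham complex) and the degree count are already recorded in the paragraph immediately before the lemma; applying $\mathbf\Psi$ then lands the result in $\widehat{\mathbf\Omega}^1_\sigma R$ with its twisted bimodule structure \eqref{eq:newtwistedbimodulestr}.

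First I would check $R$-bilinearity. It suffices to test on generators $\bm\omega = da_1\, da_2$ (extended by linearity and by the cyclic relation defining $\widehat{\mathrm{DR}}^2_\sigma R$). For $u,v \in R$ and $\Theta\in\widehat{\mathbf D\mathrm{er}}\,R$, I would expand $(u\ast\Theta\ast v)(a)$ via the inner bimodule formula \eqref{eq:innerbimodulestr}, substitute into the explicit twisted contraction formula for $\bm\iota_\Theta\bm\omega$, and match the resulting tensor against $\sigma(u)\cdot\mathbf\Psi(\bm\iota_\Theta\bm\omega)\cdot v$. The point is that the $\sigma$ applied to $\Theta(a_i)'$ at the wrap-around position in the contraction formula is exactly the $\sigma$ responsible for twisting the \emph{left} action on the target bimodule; after composing with $\mathbf\Psi$ (which shifts a $\sigma$ onto the first tensor factor), the two twists align. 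This step is essentially a bookkeeping calculation with Koszul signs.

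Next I would verify the chain-map property. The key intermediate statement is a noncommutative twisted Cartan identity
\[
\partial\circ\bm\iota_\Theta - (-1)^{|\Theta|}\bm\iota_\Theta\circ\partial \;=\; \bm\iota_{\partial\Theta}
\]
as operators $\widehat{\mathrm{DR}}^\bullet_\sigma R \to \widehat{\mathbf\Omega}^{\bullet-1}R$. This is established on generators $da_1\cdots da_n$ by applying the graded Leibniz rule for $\partial$ against the explicit contraction formula; the $\sigma$-invariance of $\bm\omega$ ensures that the rearrangements required to collect the pieces are compatible with the twisted cyclic relation. Because $\bm\omega$ extends to a closed 2-form in the negative cyclic complex associated to $\widehat{\mathrm{DR}}^\bullet_\sigma R$, we have $\partial\bm\omega = 0$ in $\widehat{\mathrm{DR}}^2_\sigma R$, so $\bm\iota_\Theta(\partial\bm\omega)=0$, and Cartan reduces to $\partial(\bm\iota_\Theta\bm\omega) = \pm\bm\iota_{\partial\Theta}\bm\omega$. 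Postcomposing with $\mathbf\Psi$, which is already known from \eqref{def:Psi} to be a DG $R$-bimodule map and in particular commutes with $\partial$, yields the chain-map identity for $\mathbf\Psi\circ\bm\iota_{(-)}\bm\omega$.

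The main obstacle will be the sign/twist bookkeeping in Step 2: the twist $\sigma$ enters at three distinct points (the target bimodule structure, the wrap-around term in the contraction, and the defining relation of $\widehat{\mathrm{DR}}^\bullet_\sigma R$), and one must confirm that these three twists interlock so that Cartan's formula holds in the twisted setting. The $\sigma$-invariance hypothesis on $\bm\omega$ is what permits moving $\sigma$ freely past $\bm\omega$ and so reconciling the three twists; without it, the twisted contraction would not even be a well-defined operator on $\widehat{\mathrm{DR}}^\bullet_\sigma R$.
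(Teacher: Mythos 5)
Your proposal is correct and follows essentially the same route as the paper: bimodule-linearity is checked by expanding $(u\ast\Theta\ast v)$ via the inner bimodule structure \eqref{eq:innerbimodulestr} and tracking where the wrap-around $\sigma$ lands, and the chain-map property is reduced, via the vanishing of $\partial\bm\omega$ in $\widehat{\mathrm{DR}}^2_\sigma R$ (twisted cyclicity), to the identity $\partial(\bm\iota_\Theta\bm\omega)=\pm\,\bm\iota_{\delta\Theta}\bm\omega$ --- which is exactly the paper's computation, just packaged as your ``twisted Cartan identity''. The only caveat is a side convention: the explicit contraction formula actually deposits the twist on the \emph{right} factor (the paper's own computation ends with $u\cdot\mathbf\Phi(\Theta)\cdot\sigma(v)$), not on the left action as you state, but this is bookkeeping of the kind you already flag.
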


\begin{proof}
Denote by $\mathbf\Phi$ the map $\Theta\mapsto\bm\Psi(\bm\iota_\Theta\bm\omega)$. 
We first show $\mathbf\Phi$ is a chain map.
In fact, in $\bm\omega=\sum d\eta'\otimes d\eta''$, if $\partial \eta'=\sum \eta'_1\cdots\eta'_p$
and $\partial \eta''=\sum \eta''_1\cdots\eta''_q$,
then
\begin{eqnarray}\label{formula:boundaryofomega}
\partial\bm\omega&=&
-\sum\big(d\partial \eta'\otimes d\eta''+(-1)^{|d\eta'|}d\eta'\otimes d\partial\eta''\big)\nonumber\\
&=&
-\sum\big(\sum_i(-1)^{|\eta_1'\cdots \eta_{i-1}'|}\eta'_1\cdots d\eta'_i\cdots\eta'_p
\otimes d\eta''\nonumber\\
&&+(-1)^{|d\eta'|}\sum_j(-1)^{|\eta''_1\cdots\eta''_{j-1}|}
d\eta'\otimes \eta''_1\cdots d\eta''_j\cdots \eta''_q\big).\nonumber
\end{eqnarray}
Since $R$ is free, that $\partial\bm\omega$ descends to be zero
in the quotient space $\widehat{\mathrm{DR}}^2_\sigma R$
means all the summands in the above expression cancel out, which
further means, up to a sign difference, the terms in the first summand and the terms
in the second summand are equal to each other up to
a cyclic permutation followed by a twisting; more precisely, we have,
up to a sign,
$
\sum_i d\eta'_i\cdots\eta'_p d\eta''\sigma(\eta'_1\cdots \eta'_{i-1})
$
equals
$
\sum_j
d\eta' \eta''_1\cdots d\eta''_j\cdots \eta''_q$
(see also Lemma 
\ref{thm:existenceoftwistedbisymplectic} for an explicit
computation when $p=q=2$).
Therefore if we denote by $\partial$ and $\delta$ the internal differential
(the differential induced from that of $R$)
of
$\widehat{\mathbf{\Omega}}^1_\sigma R$
and $\widehat{\mathbf{D}\mathrm{er}}\, R$ respectively, then 
on one hand,
\begin{eqnarray}
\partial\circ\Phi(\Theta)
&=&\partial\left(
\sum\Theta(\eta')''\cdot\mathbf\Psi(d\eta'')\cdot\sigma(\Theta(\eta')')\right)\nonumber\\
&=&\sum (\delta\Theta)(\eta')''\cdot\mathbf\Psi(d\eta'')\cdot\sigma\big((\delta\Theta)(\eta')'\big)\nonumber\\[2mm]
&&+\mbox{those terms involving $\partial\bm\omega$}\nonumber\\[2mm]
&=&\sum (\delta\Theta)(\eta')''\cdot\mathbf\Psi(d\eta'')\cdot\sigma\big((\delta\Theta)(\eta')'\big)
\label{eq:deltaofPhi},
\end{eqnarray}
where in the above, the third equality holds since
those terms involving $\partial\bm\omega$ vanish due to the twisted cyclicity described above.
On the other hand,
\begin{eqnarray}
\mathbf\Phi(\delta(\Theta))
&=&\sum (\delta\Theta)(\eta')''\cdot \mathbf\Psi(d\eta'')\cdot\sigma\big((\delta\Theta)(\eta')'\big)\label{eq:Phiofdelta}.
\end{eqnarray}
Thus that $\mathbf\Phi$ is a chain map follows
from that \eqref{eq:deltaofPhi} equals \eqref{eq:Phiofdelta}.

Next, we show that $\mathbf\Phi$ is a map of $R$-bimodules.
That is,
we need to show that, for any $u, v\in R$,
$$\mathbf\Phi(u\ast\Theta\ast v)
=u\cdot\mathbf\Phi(\Theta)\cdot \sigma(v),\quad\mbox{for all}\, \Theta\in\mathbf{D}\mathrm{er}\, R.
$$
In fact, 
if we write $\Theta(\eta)=\sum \Theta'(\eta)\otimes \Theta''(\eta)$,
then by \eqref{eq:innerbimodulestr} we have
$$(u\ast\Theta\ast v)(\eta)=\sum  \Theta'(\eta)\cdot v\otimes u\cdot \Theta''(\eta).$$
From this we obtain
\begin{eqnarray}
\mathbf\Phi(u\ast\Theta\ast v)
&=&\bm\iota_{u\ast\Theta\ast v}(\sum d\eta'\otimes d\eta'')\nonumber\\
&=&\sum
(u\cdot\Theta''(\eta'))\cdot \mathbf\Psi(d\eta'')\cdot \sigma(\Theta'(\eta')\cdot v))\nonumber\\
&=&u\cdot\left(\sum
\Theta''(\eta')\cdot \mathbf\Psi(d\eta'')\cdot \sigma(\Theta'(\eta')\right)\cdot \sigma(v)\nonumber\\
&=&u\cdot\mathbf\Phi(\Theta)\cdot \sigma(v).\label{eq:Rbimodulecompatibility}
\end{eqnarray}
The lemma is now proved.
\end{proof}

\begin{definition}[Twisted bi-symplectic structure]\label{def:twistedbisymplecticstru}
Let $R$ be a DG free algebra with $\sigma\in\mathrm{Aut}\, R$.
A twisted symmetric 2-form $\bm\omega\in\widehat{\mathrm{DR}}^2_{\sigma} R$ of total degree $n$
is called a {\it $(2-n)$-shifted twisted
bi-symplectic structure}
if 
\begin{enumerate}
\item it extends to be a closed 2-form in the negative cyclic complex associated to
$\widehat{\mathrm{DR}}^2_{\sigma} R$;

\item it is $\sigma$-invariant and induces a quasi-isomorphism
\begin{equation}\label{eq:contractionwith2forms}
\bm\Psi\circ\bm\iota_{(-)}\bm\omega:\widehat{ \mathbf D\mathrm{er}}\, R
\rightarrow\widehat{\mathbf{\Omega}}^1_\sigma R[n-2]
\end{equation}
of DG $R^e$-modules.
\end{enumerate}
We say a DG associative algebra $A$ has a {\it derived}
twisted bi-symplectic structure if its DG free model has a twisted bi-symplectic structure.
\end{definition}

\begin{remark}[About the degree shifting]
The $n$-shifted symplectic structure for derived stacks 
was introduced by Pantev et al in \cite{PTVV}.
The above definition is the noncommutative and twisted
analogue of theirs. 
In what follows, we prefer to write
\eqref{eq:contractionwith2forms}
in the form
$$
\bm\Psi\circ\bm\iota_{(-)}\bm\omega:\widehat{\mathbf D\mathrm{er}}\, R[1]
\rightarrow\widehat{\mathbf{\Omega}}^1_\sigma R[-1][n].
$$
\end{remark}
 
\begin{corollary}\label{cor:twistedsympimpliestwistedPD}
Suppose $A$ has a derived twisted $(2-n)$-shifted symplectic structure.
Denote by $\sigma$ the automorphism of $A$.
Then
$$
\mathrm{HH}^\bullet(A)\cong\mathrm{HH}_{n-\bullet}(A, A_\sigma).
$$
\end{corollary}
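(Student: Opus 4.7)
The plan is to descend the quasi-isomorphism \eqref{eq:contractionwith2forms} to commutator quotients on both sides and identify each quotient with (reduced) Hochschild (co)homology by invoking the results already established in this section.

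First, let $R \xrightarrow{\simeq} A$ be a DG free model of $A$ realizing the derived twisted bi-symplectic structure; thus $\bm\omega \in \widehat{\mathrm{DR}}^2_\sigma R$ is a $\sigma$-invariant, closed $2$-form of total degree $n$, and
$$
\bm\Psi \circ \bm\iota_{(-)} \bm\omega : \widehat{\mathbf D\mathrm{er}}\, R[1] \xrightarrow{\;\simeq\;} \widehat{\mathbf\Omega}^1_\sigma R[-1][n]
$$
is a quasi-isomorphism of DG $R$-bimodules by Definition \ref{def:twistedbisymplecticstru}. Taking commutator quotients is an exact functor between DG $R^e$-modules with free (or flat) actions, and the map above is built from the twisted contraction, which by Lemma \ref{lemma:NCcontraction} and the computation \eqref{eq:Rbimodulecompatibility} is $R$-bilinear in the twisted outer sense that matches the definition of the twisted Karoubi--de Rham complex. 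Hence it descends to a quasi-isomorphism of complexes
$$
\bigl( \widehat{\mathbf D\mathrm{er}}\, R[1] \bigr)_\natural \xrightarrow{\;\simeq\;} \widehat{\mathrm{DR}}^1_\sigma R\, [-1][n].
$$

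Next, I would identify the two sides with the Hochschild complexes of $A$. On the left, formula \eqref{eq:commutatorquotientofdoublederivation} gives
$$
\bigl( \widehat{\mathbf D\mathrm{er}}\, R[1] \bigr)_\natural \simeq \overline{\mathrm{CH}}^\bullet(A),
$$
so the left-hand side computes the (reduced) Hochschild cohomology $\mathrm{HH}^\bullet(A)$. On the right, Theorem \ref{thm:relationstoHochschild} gives
$$
\widehat{\mathrm{DR}}^1_\sigma R \simeq \overline{\mathrm{CH}}_\bullet(A, A_\sigma),
$$
which computes the twisted Hochschild homology $\mathrm{HH}_\bullet(A, A_\sigma)$. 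Combining these two identifications with the descended quasi-isomorphism above and tracking the degree shift $[-1][n]$ yields
$$
\mathrm{HH}^\bullet(A) \;\cong\; \mathrm{HH}_{n-\bullet}(A, A_\sigma),
$$
which is the claim. The passage from reduced to unreduced Hochschild (co)homology is standard since $k$ has characteristic zero, so it does not affect the isomorphism.

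The step I expect to require the most care is verifying that the map \eqref{eq:contractionwith2forms} is genuinely compatible with passage to commutator quotients, i.e.\ that the cyclic invariance used to make $\bm\iota_{(-)}\bm\omega$ well-defined on $\widehat{\mathrm{DR}}^\bullet_\sigma R$ (which is implicit in Lemma \ref{lemma:NCcontraction}) also guarantees that the image of the inner commutator subspace $[R, \widehat{\mathbf D\mathrm{er}}\, R[1]]$ lies in the twisted commutator subspace defining $\widehat{\mathrm{DR}}^1_\sigma R$. Once this bookkeeping is done, the corollary follows formally from the three ingredients: the quasi-isomorphism \eqref{eq:contractionwith2forms}, the identification \eqref{eq:commutatorquotientofdoublederivation}, and Theorem \ref{thm:relationstoHochschild}.
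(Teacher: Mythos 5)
Your proposal is correct and follows essentially the same route as the paper: the paper's proof likewise descends the quasi-isomorphism of Definition \ref{def:twistedbisymplecticstru} to commutator quotients and invokes \eqref{eq:commutatorquotientofdoublederivation} and Theorem \ref{thm:relationstoHochschild} to identify the two sides with $\overline{\mathrm{CH}}^\bullet(A)$ and $\overline{\mathrm{CH}}_\bullet(A,A_\sigma)$. Your additional remarks on why the descent to quotients preserves the quasi-isomorphism (both bimodules being DG free) are points the paper leaves implicit, but they do not change the argument.
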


\begin{proof}
By \eqref{eq:twistedHHKoszul} and \eqref{eq:commutatorquotientofdoublederivation}
 we have
$$
(\widehat{\mathbf D\mathrm{er}}\, R[1])_\natural\simeq\overline{\mathrm{CH}}^\bullet(A)\quad\mbox{and}\quad
(\widehat{\mathbf{\Omega}}^1_\sigma R[-1])_\natural\simeq\overline{\mathrm{CH}}_\bullet(A,A_\sigma)
$$
respectively.
Thus the corollary follows from Definition \ref{def:twistedbisymplecticstru} and 
Theorem \ref{thm:relationstoHochschild}. 
\end{proof}

\section{AS-regular algebras}\label{sect:ASalgebra}

The notion of Artin-Schelter regular algebras was introduced
by Artin and Schelter in \cite{AS}. They have been extensively studied
in the past three decades.
One may refer to \cite{Yekutieli}, in particular Chapter 18, 
for some more backgrounds which
are related to our study.
In this section, we first recall the definition of AS-regular algebras,
discuss their Koszul dual algebra and coalgebra,
and then prove Theorem \ref{thm:firstmain}.

\begin{definition}[Artin-Schelter \cite{AS}]
A connected graded algebra $A$ is called {\it Artin-Schelter regular} (or AS-regular for short) of dimension $n$ if
\begin{enumerate}
\item $A$ has finite global dimension $n$, and
\item $A$ is Gorenstein, that is, 
$\mathrm{Ext}^i_{A}(k,A)=0$ for $i\neq n$ and $\mathrm{Ext}^n_{A}(k,A)\cong k$.
\end{enumerate}
\end{definition}

Every AS-regular algebra has an automorphism, called the {\it Nakayama automorphism}. 
In this paper, we are mostly interested in those AS-regular algebras which are Koszul
(c.f. \cite[Chapters 1-3]{LV} for more details of Koszul algebras).
For a Koszul AS-regular algebra, its Nakayama automorphism can be
explicitly described; 
for the Nakayama automorphism of a not-necessarily-Koszul AS regular algebra, see,
for example, \cite{RRZ}.

Suppose $A$ is a Koszul AS-regular algebra, then the Koszul complex
$$
\xymatrix{
\cdots\ar[r]&
A\otimes A^{\textup{!`}}_i\ar[r]^-d&
A\otimes A^{\textup{!`}}_{i-1}\ar[r]^-d&
\cdots\ar[r]&
A\otimes k\ar[r]^-{\varepsilon}&
k
}$$
gives a resolution of $k$ as a left $A$-module,
where $A^{\textup{!`}}$ is the Koszul dual coalgebra of $A$.
From this we obtain 
$$\mathrm{RHom}_A(k, A)=\mathrm{Hom}_A(\oplus_i A\otimes A^{\textup{!`}}_i, A)=A\otimes A^!,
$$
where $A^!$ is the Koszul dual algebra of $A$,
and thus from $\mathrm{RHom}_A(k, A)\cong k[n]$ we have
$$
A\otimes A^!\cong A\otimes A^{\textup{!`}}[n].
$$
The compatibility of the differentials gives
isomorphism $A^!\cong A^{\textup{!`}}[n]$ as left $A^!$-modules.
Similarly we get $A^!\cong A^{\textup{!`}}[n]$ as right $A^!$-modules.
These two isomorphisms together are equivalently saying that $A^!$ is a Frobenius algebra
of degree $n$
(see e.g. \cite{Smith} for more studies of this result).
In what follows, we study the structure of a Frobenius algebra with some
more details.

\subsection{Frobenius algebras}

In the literature, a {\it Frobenius algebra structure} of degree $n$
on a finite dimensional graded algebra, say $A^!$, is a non-degenerate
bilinear pairing of degree $n$
\begin{equation}\label{eq:defofFrobeniuspairing}
\langle-,-\rangle: A^!\otimes A^!\to k
\end{equation}
such that $\langle a, b\cdot c\rangle=\langle a\cdot b, c\rangle$,
for all $a, b, c\in A^!$. This is equivalent
to saying that $A^!$ is isomorphic to its linear dual $A^{\textup{!`}}[n]$ as
left and right $A^!$-modules.
Since $A^!$ is finite dimensional, $A^{\textup{!`}}$
is a graded coalgebra, and the adjoint of \eqref{eq:defofFrobeniuspairing}
gives a map
\begin{equation}\label{eq:adjointofthepairing}
k\to A^{\textup{!`}}\otimes A^{\textup{!`}}, \quad 1\mapsto \sum \eta'\otimes\eta''.
\end{equation}
This map factors through $A^{\textup{!`}}$; that is, there is an $\eta$ 
in the degree $n$ component of $A^{\textup{!`}}$ 
(which is 1-dimensional) 
such that $\sum\eta'\otimes\eta''=\Delta(\eta)$,
which $\Delta(-)$ is the coproduct of $A^{\textup{!`}}$.
In this form,
the pairing on $A^!$ is also given by
$$
(u, v)\mapsto \eta'(v)\cdot\eta''(u),\quad\mbox{for}\,\, u, v\in A^!,
$$
By the non-degeneracy of the pairing on $A^!$,
we have two isomorphisms of graded vector spaces
$
A^!\to A^{\textup{!`}}[n]
$
which are given by
\begin{equation}\label{eq:modulemap}
u\mapsto \sum \eta''(u)\cdot \eta'\quad\mbox{and}\quad
u\mapsto \sum \eta'(u)\cdot\eta''.
\end{equation}
Since $\sum \eta'\otimes\eta''=\Delta(\eta)$,
these two maps are exactly the slant product of $u$ with $\eta$ from
the left and the right respectively,
and therefore by the co-associativity of the co-product,
these isomorphisms can be extended to 
an isomorphism of left and right $A^!$-modules, respectively.

However, on the other hand, these two isomorphisms
may not give an $A^!$-bimodules on $A^{\textup{!`}}$, due to the fact that
the pairing on $A^!$ may not be graded symmetric; that is, $\langle a, b\rangle$ may not
be equal to $(-1)^{|a||b|}\langle b,a\rangle$.
Nevertheless, by the non-degeneracy of the pairing
there exists an automorphism $\sigma^*: A^!\to A^!$ such that 
$\langle a, b\rangle=(-1)^{|a||b|}\langle b,\sigma^*(a)\rangle$.
Now, equip
$A^{\textup{!`}}$ as an $A^!$-bimodule
structure by setting
$$u\circ x\circ v:=\sigma^*(u) x v,$$
which is denoted by $A^{\textup{!`}}_\sigma$, 
then we have
$
A^!\cong A^{\textup{!`}}_\sigma[n]
$
as $A^!$-bimodules.
Here, $\sigma^*$ is called the {\it Nakayama automorphism} of $A^!$,
whose adjoint is denoted by $\sigma:A^{\textup{!`}}\to A^{\textup{!`}}$.

Now let  $\Omega(A^{\textup{!`}})$ be the cobar construction of $A^{\textup{!`}}$, which
is a DG free algebra generated by $\Sigma^{-1}\bar A^{\textup{!`}}$,
the augmentation of $A^{\textup{!`}}$ with degree shifted down by 1.
Then $\sigma$ can be lifted to be an automorphism
$\sigma: \Omega(A^{\textup{!`}})\to  \Omega(A^{\textup{!`}})$.
Going back to the Koszul AS-regular algebra case, 
since $\Omega(A^{\textup{!`}})\to A$ is a quasi-isomorphisms of DG algebras,
$\sigma$ also induces an automorphism $\sigma: A\to A$, which
is also called the {\it Nakayama automorphism} of $A$.
We have the following result due to Reyes, Rogalski and Zhang in \cite[Lemma 1.2]{RRZ}:

\begin{theorem}Let $A$ be an associative algebra.
Then $A$ is AS-regular of dimension $n$ with Nakayama automorphism
$\sigma$ if and only if
$A$ is twisted Calabi-Yau of dimension $n$; that is, $A$ satisfies the following two
conditions:
\begin{enumerate}
\item $A$ is homologically smooth, and
\item $\mathrm{RHom}_{A^e}(A, A\otimes A)\cong A_{\sigma}[n]$ in $D(A^e)$.
\end{enumerate}
\end{theorem}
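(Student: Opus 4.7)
The plan is to prove both directions of the equivalence, leveraging the Koszul-dual/Frobenius framework developed just above the statement. In the Koszul case (which is the focus of the paper) the argument can be made fully explicit via the Koszul bimodule resolution of $A$; the general case reduces to this by replacing the Koszul dual by the $A_\infty$-structure on $\mathrm{Ext}^\bullet_A(k,k)$ and its induced Frobenius pairing coming from the AS-Gorenstein condition.

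For the forward direction ($\Rightarrow$), I would first establish homological smoothness by writing down the Koszul bimodule complex
\[
K_\bullet := A \otimes A^{\textup{!`}} \otimes A
\]
equipped with the standard Koszul differential; this is a finite projective resolution of $A$ over $A^e$, with length $n$ since the Gorenstein hypothesis forces $A^{\textup{!`}}$ to be concentrated in degrees $0,\dots,n$. Next I would compute $\mathrm{RHom}_{A^e}(A, A\otimes A)$ by applying $\mathrm{Hom}_{A^e}(-, A\otimes A)$ with the outer bimodule structure on $A \otimes A$ to $K_\bullet$, obtaining the complex $A \otimes A^! \otimes A$ with the transposed Koszul differential. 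The Frobenius isomorphism $A^! \cong A^{\textup{!`}}_\sigma[n]$ of $A^!$-bimodules established in the preceding subsection then identifies this complex, up to a shift by $n$ and a twist by $\sigma$, with the Koszul resolution of $A_\sigma$; this produces the required isomorphism in $D(A^e)$.

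For the backward direction ($\Leftarrow$), I would derive the two defining conditions of AS-regularity. Homological smoothness gives $A$ finite projective dimension over $A^e$, from which finite global dimension of $A$ (at most $n$, as dictated by the second condition) follows by restriction along the diagonal $A \to A^e$. To obtain the Gorenstein property, I would apply $- \otimes^{\mathbf L}_A k$ to both sides of $\mathrm{RHom}_{A^e}(A, A\otimes A) \cong A_\sigma[n]$; the left side reduces, via standard adjunction and the outer bimodule action, to $\mathrm{RHom}_A(k,A)$, while the right side becomes $k[n]$ since $A_\sigma \otimes^{\mathbf L}_A k \cong k$ in the connected graded setting (the twist disappears modulo the augmentation ideal). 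This yields $\mathrm{Ext}^i_A(k,A)=0$ for $i\neq n$ and $\mathrm{Ext}^n_A(k,A)\cong k$, which is exactly the Gorenstein condition.

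The delicate point, and the one I expect to be the main obstacle, is matching the two a priori distinct notions of $\sigma$: the automorphism arising intrinsically from the inverse dualizing complex via the twisted Calabi-Yau isomorphism, versus the Nakayama automorphism of $A$ obtained by lifting the Frobenius Nakayama automorphism $\sigma^*$ of $A^!$ through the cobar quasi-isomorphism $\Omega(A^{\textup{!`}}) \simeq A$. Verifying that these coincide amounts to tracking how the failure of graded symmetry of the Frobenius pairing $\langle -,-\rangle$ on $A^!$ propagates through the bimodule structures and dualizations used in computing $\mathrm{RHom}_{A^e}(A, A\otimes A)$. In the non-Koszul case, an additional layer of bookkeeping is required because the Frobenius-type pairing must be read off from the $A_\infty$-Ext algebra of $A$, and one must verify the twist it produces on the minimal bimodule resolution is the right one; this is the technical content of Reyes--Rogalski--Zhang's lemma.
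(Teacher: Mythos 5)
The paper does not actually prove this statement: it is quoted verbatim from Reyes--Rogalski--Zhang \cite[Lemma 1.2]{RRZ}, whose proof goes through the theory of (rigid/balanced) dualizing complexes for connected graded rings rather than through Koszul duality. So there is no internal proof to match your attempt against; what you have written is a genuinely different route. In the Koszul case your forward direction is essentially the standard argument and dovetails with the machinery the paper sets up right before the theorem (the Koszul bimodule complex $A\otimes A^{\textup{!`}}\otimes A$, dualization to $A\otimes A^!\otimes A$, and the Frobenius isomorphism $A^!\cong A^{\textup{!`}}_\sigma[n]$), and your backward direction via $-\otimes^{\mathbf L}_A k$ and the adjunction $\mathrm{RHom}_A(k,A)\cong\mathrm{RHom}_{A^e}(A,A\otimes A)\otimes^{\mathbf L}_A k$ is the right reduction (note one small slip: it is the finite-global-dimension hypothesis, not the Gorenstein one, that forces $A^{\textup{!`}}$ to be concentrated in degrees $0,\dots,n$).

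There are, however, two genuine gaps. First, the theorem is stated for arbitrary AS-regular algebras, and your reduction of the non-Koszul case to ``the $A_\infty$-structure on $\mathrm{Ext}^\bullet_A(k,k)$ and its induced Frobenius pairing'' is asserted rather than carried out; this is not a routine extension, which is precisely why RRZ work with dualizing complexes instead. Second, and more seriously, the clause ``with Nakayama automorphism $\sigma$'' is part of the assertion: the theorem claims that the automorphism appearing in $\mathrm{RHom}_{A^e}(A,A\otimes A)\cong A_\sigma[n]$ is the same $\sigma$ as the Nakayama automorphism of $A$ (which the paper defines by lifting $\sigma^*$ on $A^!$ through the cobar resolution). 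You correctly flag this identification as the delicate point but then leave it unverified, so as written the proof establishes only that $A$ is twisted Calabi--Yau with respect to \emph{some} automorphism, not that this automorphism is the Nakayama automorphism. Tracking the twist explicitly through the dualization $\mathrm{Hom}_{A^e}(A\otimes A^{\textup{!`}}\otimes A, A\otimes A)\cong A\otimes A^!\otimes A$ and the Frobenius isomorphism is exactly the missing computation.
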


In the theorem, {\it twisted Calabi-Yau} comes from the fact that, if $\sigma$ is the identity map, then
$A$ is exactly a {\it Calabi-Yau algebra} in the sense of Ginzburg \cite{Ginzburg}.
Observe that the second condition in the above theorem
is a derived invariant, and therefore if $R$ is a DG free resolution of $A$,
then we also have
$\mathrm{RHom}_{R^e}(R, R\otimes 
R)\cong R_{\sigma}[n]$ in $D(R^e)$,
where $\sigma$ is the lifting of the Nakayama automorphism of $A$.
Since now $R$ is DG free, we have a short exact sequence
$$
0\longrightarrow \mathbf{\Omega}^1 R\longrightarrow R\otimes R\longrightarrow R\longrightarrow 0,
$$
where $\mathbf{\Omega}^1 R$ is a DG free $R$-bimodule.
This means $\widehat{\mathbf{\Omega}}^1 R[-1]$ is a DG free resolution of $R$ as $R$-bimodules,
and therefore,
$$\mathrm{RHom}_{R^e}(R, R\otimes R)\cong\mathrm{Hom}_{R^e}(\widehat{\mathbf{\Omega}}^1 R[-1], R\otimes R)
\cong\widehat{\mathbf{D}\mathrm{er}}\, R[1]$$
and similarly
$
\widehat{\mathbf{\Omega}}^1_\sigma R[-1]\cong R_\sigma[n]
$ 
in $D(R^e)$. Thus
$\mathrm{RHom}_{R^e}(R, R\otimes 
R)\cong R_{\sigma}[n]$ 
is equivalent to 
$
\widehat{\mathbf{D}\mathrm{er}}\, R[1]\cong \widehat{\mathbf{\Omega}}^1_\sigma R[-1][n]
$
in $D(R^e)$.

We now discuss some more details on $\widehat{\mathbf{D}\mathrm{er}}\, R[1]$ for 
$R=\Omega(A^{\textup{!`}})$.
Since $A^!$ is finite dimensional (and hence so is $A^{\textup{!`}}$), the product 
$\mu: A^!\otimes A^!\to A^!$ induces
two maps
$$
A^!\to A^!\otimes A^{\textup{!`}}, u\mapsto \sum_{c} \tilde cu\otimes c\quad \mbox{and}\quad
A^!\to A^{\textup{!`}}\otimes A^!, u\mapsto \sum_{c} c\otimes u\tilde c 
$$
by adjunction, where $c$ runs over the bases of $A^{\textup{!`}}$ and $\tilde c$ is its linear dual.
By the associativity of the product, the above two maps
in fact make $A^!$ into a left and right $A^{\textup{!`}}$-comodules.
Moreover, since the product on $A^!$ is an $A^!$-bimodule map, we get that
$$A^!\mapsto A^!\otimes A^{\textup{!`}}\oplus A^{\textup{!`}}\otimes A^!\subset A^{\textup{!`}}\otimes A^!\otimes A^{\textup{!`}},
\, u\mapsto \sum_c \tilde cu\otimes c+c\otimes u\tilde c $$
makes $A^!$ into an $A^{\textup{!`}}$-bi-comodule.
The internal differential, say $\delta$, on $\widehat{\mathbf{D}\mathrm{er}}\, R$
is given as follows:
since 
\begin{equation}\label{eq:adjointofderandforms}
\widehat{\mathbf{D}\mathrm{er}}\, R[1]\cong \mathrm{Hom}_{R^e}(\widehat{\mathbf{\Omega}}^1 R[-1],R\otimes R)
\cong \mathrm{Hom}_{R^e}(R\otimes A^{\textup{!`}}\otimes R, R\otimes R)\cong R\otimes A^!\otimes R
\end{equation}
as $R^e$-modules,
taking a generator $u\in A^!\cong k\otimes A^!\otimes k\subset R\otimes A^!\otimes R$,
if we denote by $\bm Du\in\widehat{\mathbf{D}\mathrm{er}}\, R[1]$ its
image under the above isomorphism, 
then
\begin{equation}\label{eq:internaldiffonDer}
\delta (\bm Du)=\sum_c(-1)^{|c|+|u|-1} \tilde c u\otimes \bar c+\bar c\otimes u\tilde c\in
A^!\otimes \Sigma^{-1}\bar A^{\textup{!`}}\oplus \Sigma^{-1} \bar A^{\textup{!`}}\otimes A^!\subset R\otimes A^!\otimes R,
\end{equation}
where $\bar c$ is image of $c$ under the natural map $A^{\textup{!`}}\to\Sigma^{-1} \bar A^{\textup{!`}}$.
Observe that $\delta$ is dual, via \eqref{eq:adjointofderandforms}, 
to the internal differential 
on $\widehat{\mathbf{\Omega}}^1 R$ induced by $\partial$ on $R$.

\subsection{The twisted bi-symplectic structure}\label{subsect:twistedbisymp}

Now suppose $A$ is a Koszul AS-regular algebra of dimension $n$
and let $R:=\Omega(A^{\textup{!`}})$.
Take $\sum \eta'\otimes\eta''=\Delta(\eta)$ as given in \eqref{eq:adjointofthepairing}
and
consider the following 2-form
\begin{equation}\label{eq:bisymplecticform}
\bm\omega:=\frac{1}{2}\sum d \eta'\otimes d \eta''\in\widehat{\mathbf{\Omega}}^2 R,
\end{equation}
which descends to a 2-form, still denoted by $\bm\omega$, 
in $\widehat{\mathrm{DR}}^2_{\sigma} R$.
We have the following:

\begin{lemma}\label{thm:existenceoftwistedbisymplectic}
Suppose $A$ is Koszul AS-regular. Let $R=\mathbf{\Omega}(A^{\textup{!`}})$ be the cobar construction of $A^{\textup{!`}}$.
Then $\bm\omega\in\widehat{\mathrm{DR}}^2_{\sigma} R$ 
is both $\partial$- and $d$-closed.
\end{lemma}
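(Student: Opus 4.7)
The $d$-closedness is immediate. Since $\bm\omega = \tfrac{1}{2}\sum d\bar\eta' \cdot d\bar\eta''$ is a product of exact $1$-forms in $\widehat{\mathbf\Omega}^\bullet R$, and $d$ is extended as a derivation with $d^2=0$, one has $d\bm\omega=0$ already in $\widehat{\mathbf\Omega}^\bullet R$, and a fortiori in the quotient $\widehat{\mathrm{DR}}^2_\sigma R$.

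The substantive part is $\partial$-closedness. My plan is to first use $\partial d + d\partial = 0$ together with the graded Leibniz rule to rewrite
\[
\partial\bm\omega \;=\; -\tfrac{1}{2}\sum\bigl(d\partial\bar\eta'\cdot d\bar\eta'' + (-1)^{|d\bar\eta'|}\, d\bar\eta'\cdot d\partial\bar\eta''\bigr).
\]
Next I would use the cobar formula $\partial \bar c = \sum \pm\,\bar c_1\bar c_2$, where $\Delta(c)=\sum c_1\otimes c_2$ is the reduced coproduct on $A^{\textup{!`}}$, to expand both $\partial\bar\eta'$ and $\partial\bar\eta''$, and then invoke the coassociativity of $\Delta$ on $\eta$ to rewrite each half uniformly in terms of the iterated coproduct
\[
\Delta^{(2)}(\eta) \;=\; \sum \eta_{(1)}\otimes\eta_{(2)}\otimes\eta_{(3)}.
\]
Expanding the remaining outer $d$'s with the Leibniz rule then turns $\partial\bm\omega$ into a sum of triple-product monomials in $\bar\eta_{(1)},\bar\eta_{(2)},\bar\eta_{(3)}$, organized into four families according to which of the three slots carry a $d$: two families come from $d\partial\bar\eta'\cdot d\bar\eta''$ and two from $d\bar\eta'\cdot d\partial\bar\eta''$.

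The cancellation in $\widehat{\mathrm{DR}}^2_\sigma R$ is driven by two compatible cyclicity properties. On the one hand, the defining twisted commutator $xy \equiv (-1)^{|x||y|}\, y\,\sigma(x)$ of $\widehat{\mathrm{DR}}^\bullet_\sigma R$ allows a triple product to be cyclically rotated at the cost of $\sigma$-twisting the factor that wraps around. On the other hand, the twisted symmetry $\langle a,b\rangle = (-1)^{|a||b|}\langle b,\sigma^*(a)\rangle$ of the Frobenius form on $A^!$ dualizes (after one iteration) to the twisted rotational invariance
\[
\sum \eta_{(1)}\otimes\eta_{(2)}\otimes\eta_{(3)} \;=\; \pm\,\sum \eta_{(2)}\otimes\eta_{(3)}\otimes\sigma(\eta_{(1)}).
\]
Matching these two rotations identifies each of the two families arising from $d\partial\bar\eta'\cdot d\bar\eta''$ with one of the two arising from $d\bar\eta'\cdot d\partial\bar\eta''$, so that, with the right signs, the four families collapse pairwise and $\partial\bm\omega$ vanishes in the quotient.

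The main obstacle is purely sign bookkeeping. One must check that the Koszul signs produced by the graded Leibniz rules for $\partial$ and $d$, together with the signs in the cobar differential, combine with the twist sign in the cyclic relation and with the sign in the twisted symmetry of $\Delta^{(2)}(\eta)$ to yield exact pairwise cancellation rather than mere agreement up to $\pm 1$. This is a direct analogue of the $p=q=2$ sign comparison foreshadowed in the discussion preceding Lemma~\ref{lemma:NCcontraction}; once that bookkeeping is done, $\partial\bm\omega=0$ in $\widehat{\mathrm{DR}}^2_\sigma R$, completing the proof.
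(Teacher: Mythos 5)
Your proposal is correct and follows essentially the same route as the paper's proof: expand $\partial\bm\omega$ via the Leibniz rule and the cobar differential into four triple-product families indexed by $\Delta^{(2)}(\eta)=\sum\eta_{(1)}\otimes\eta_{(2)}\otimes\eta_{(3)}$, cancel one pair by coassociativity alone, and cancel the other by combining the twisted commutator relation defining $\widehat{\mathrm{DR}}^\bullet_\sigma R$ with the twisted cyclic invariance $\sum\eta_{(1)}\otimes\eta_{(2)}\otimes\eta_{(3)}=\pm\sum\eta_{(2)}\otimes\eta_{(3)}\otimes\sigma(\eta_{(1)})$ dual to the Nakayama twist of the Frobenius pairing. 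The only thing the paper adds is the explicit displayed sign computation confirming the bookkeeping you deferred.
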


\begin{proof} (Compare with \cite[Lemma 4.5]{CE}.)
It is obvious that $\bm\omega$ is $d$-closed. We next show it is also $\partial$-closed.
In fact, if we write $\Delta(c)=\sum c_1\otimes c_2$ for any $c\in A^{\textup{!`}}$, then
\begin{eqnarray}
\partial \bm\omega 
&=&\frac{1}{2}\sum \partial(d \eta') \otimes d \eta'' +(-1)^{|\eta'|}d \eta' \otimes
\partial(d \eta'')\nonumber \\
&=&-\frac{1}{2}\sum d(\partial \eta')\otimes d \eta''+(-1)^{|\eta'|}d \eta'\otimes d(\partial  \eta'')
\nonumber\\
&=&-\frac{1}{2}\sum (-1)^{|\eta'_1|}d( \eta'_1 \eta'_2)\otimes d \eta''
+(-1)^{|\eta'|+|\eta''_1|}d \eta'\otimes d( \eta''_1 \eta''_2)\nonumber\\
&=&-\frac{1}{2}\sum\Big((-1)^{|\eta'_1|} (d \eta'_1)
\bar\eta'_2\otimes d \eta''-\bar\eta'_1(d \eta'_2) \otimes d \eta''\nonumber\\
&&+(-1)^{|\eta'|+|\eta''_1|}d\eta'\otimes (d\bar\eta''_1)\eta''_2-(-1)^{|\eta'|}d\bar\eta'\otimes\bar
\eta''_1(d\eta''_2)\Big).\label{eq:twistedcyclicity}
\end{eqnarray}
In the right hand side of the last equality, the first and the last summands cancel with each other,
due to the co-associativity of $A^{\textup{!`}}$. For 
the second and third summands, let us recall that in $A^!$, we have
$$\langle a, b\cdot c\rangle=(-1)^{(|b|+|c|)|a|}
\langle b\cdot c, \sigma^*(a)\rangle
=(-1)^{(|b|+|c|)|a|}
\langle b, c\cdot \sigma^*(a)\rangle,\quad \mbox{for all}\, a, b, c\in A^!.$$
This means on the linear dual space $A^{\textup{!`}}$,
suppose $(id\otimes \Delta)\circ\Delta(\eta)=\sum\eta'\otimes\eta''\otimes\eta'''$,
then
$$
\sum\eta'\otimes\eta''\otimes\eta'''=\sum(-1)^{|\eta'|(|\eta''|+|\eta'''|)}
\eta''\otimes\eta'''\otimes\sigma(\eta').
$$
Thus when passing to the twisted quotient space,
the second and third summands cancel with each other.
This proves the lemma.
\end{proof}

\begin{proof}[Proof of Theorem \ref{thm:firstmain}]
By Definition \ref{def:twistedbisymplecticstru}
and Lemma \ref{thm:existenceoftwistedbisymplectic},
we only need to show $\bm\omega$ is $\sigma$-invariant and
\begin{equation}\label{eq:qisintwistedcase}
\bm\Psi\circ\bm\iota_{(-)}\bm\omega:\widehat{ \mathbf D\mathrm{er}}\, R[1]
\rightarrow\widehat{\mathbf{\Omega}}^1_\sigma R[-1][n]
\end{equation}
is a quasi-isomorphism.

In fact, since 
on $A^!$ the pairing satisfies $\langle a, b\rangle=(-1)^{|a||b|}\langle
\sigma(b),a\rangle=\langle\sigma(a),\sigma(b)\rangle$,
we get that $\Delta(\eta)=\sum\eta'\otimes\eta''$ and hence $\bm\omega$ 
is $\sigma$-invariant. Also, since
$$\widehat{\mathbf{\Omega}}^1_\sigma R[-1]\cong R\otimes A^{\textup{!`}}\otimes R
\quad\mbox{and}\quad
\widehat{ \mathbf D\mathrm{er}}\, R[1]\cong R\otimes A^!\otimes R,$$
the isomorphism
\eqref{eq:qisintwistedcase}
on the generators becomes
$$
\bm\iota_{(-)}\bm\omega: A^{!}\to A^{\textup{!`}}[n],\, 
u\mapsto \sum u(\eta')\cdot \eta''[n],
$$
which is exactly the isomorphism given by \eqref{eq:modulemap}.
Therefore the theorem follows.
\end{proof}

In other words, $\bm\omega$ given by \eqref{eq:bisymplecticform} is a (derived) twisted
bi-symplectic structure on $A$.

As a by-product, we get the following result due to Van den Bergh \cite{VdB98} in general
and Reyes, Rogalski and Zhang \cite{RRZ} in the case of AS-regular algebras, which is in fact
the main motivation of the current paper:

\begin{corollary}[Van den Bergh \cite{VdB98}; Reyes-Rogalski-Zhang \cite{RRZ}]\label{cor:VdBPD}
Suppose $A$ is AS-regular of dimension $n$.
Then
\begin{equation}\label{eq:VdBNCPD}
\mathrm{HH}^\bullet(A)\cong\mathrm{HH}_{n-\bullet}(A, A_\sigma),
\end{equation}
which coincides with Van den Bergh's noncommutative Poincar\'e duality \cite{VdB98}.
\end{corollary}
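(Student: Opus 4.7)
The plan is to derive this noncommutative Poincar\'e duality as a direct consequence of the twisted bi-symplectic structure established in Theorem \ref{thm:firstmain}, by applying Corollary \ref{cor:twistedsympimpliestwistedPD}. Concretely, I would first invoke Theorem \ref{thm:firstmain} to put a derived twisted $(2-n)$-shifted bi-symplectic structure on the Koszul AS-regular algebra $A$, via the explicit 2-form $\bm\omega = \tfrac{1}{2}\sum d\eta' \otimes d\eta''$ on the cobar resolution $R = \Omega(A^{\textup{!`}})$. That form provides an isomorphism of DG $R^e$-modules
$$\bm\Psi\circ\bm\iota_{(-)}\bm\omega: \widehat{\mathbf{D}\mathrm{er}}\, R[1] \stackrel{\simeq}{\longrightarrow} \widehat{\mathbf{\Omega}}^1_\sigma R[-1][n],$$
and this is precisely the hypothesis of Corollary \ref{cor:twistedsympimpliestwistedPD}.

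Next I would pass to commutator quotients on both sides. Since this isomorphism is between cofibrant DG bimodules, applying the functor $(-)_\natural$ preserves the quasi-isomorphism. On the left-hand side, equation \eqref{eq:commutatorquotientofdoublederivation} identifies $(\widehat{\mathbf{D}\mathrm{er}}\, R[1])_\natural$ with the reduced Hochschild cochain complex $\overline{\mathrm{CH}}^\bullet(A)$, hence computes $\mathrm{HH}^\bullet(A)$. On the right-hand side, Theorem \ref{thm:relationstoHochschild} identifies $(\widehat{\mathbf{\Omega}}^1_\sigma R[-1])_\natural = \widehat{\mathrm{DR}}^1_\sigma R$ with the reduced twisted Hochschild chain complex $\overline{\mathrm{CH}}_\bullet(A, A_\sigma)$, and the degree shift by $n$ converts this into $\mathrm{HH}_{n-\bullet}(A, A_\sigma)$. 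Combining the two identifications yields the claimed isomorphism.

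Essentially all the technical work has already been done: Theorem \ref{thm:firstmain} supplies the bi-symplectic form and shows the contraction map is a DG-bimodule quasi-isomorphism, while Theorem \ref{thm:relationstoHochschild} and \eqref{eq:commutatorquotientofdoublederivation} translate the two sides into Hochschild (co)homology. The corollary is then only the ``shadow'' of the DG-enhancement after taking commutator quotients, which is exactly what Corollary \ref{cor:twistedsympimpliestwistedPD} asserts. The only mildly delicate point to mention is that the statement is phrased for arbitrary AS-regular algebras while the argument above rests on the Koszul hypothesis (through the use of $R = \Omega(A^{\textup{!`}})$ and its Frobenius Koszul dual); for the non-Koszul case the duality is the original result of Van den Bergh and Reyes--Rogalski--Zhang cited in the statement, whereas the contribution here is to recover and enhance it in the Koszul setting via derived noncommutative symplectic geometry.
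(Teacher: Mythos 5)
Your proposal is correct and follows essentially the same route as the paper, which proves the corollary by combining Theorem \ref{thm:firstmain} with Corollary \ref{cor:twistedsympimpliestwistedPD}; the commutator-quotient identifications you spell out via Theorem \ref{thm:relationstoHochschild} and \eqref{eq:commutatorquotientofdoublederivation} are exactly the content of the paper's proof of Corollary \ref{cor:twistedsympimpliestwistedPD}. Your closing caveat about the Koszul hypothesis is also well taken, since the paper's argument likewise rests on the cobar resolution $R=\Omega(A^{\textup{!`}})$.
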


\begin{proof}
Follows from Theorem \ref{thm:firstmain} and Corollary \ref{cor:twistedsympimpliestwistedPD}.
\end{proof}

\begin{remark}
In the case of Calabi-Yau algebras,
de Thanhoffer de V\"olcsey and Van den Bergh proved in \cite[Theorem 5.5]{dTdVVdB}
that there exists a volume class $\eta\in\mathrm{HH}_n(A)$ such
that the noncommutative Poincar\'e duality
$$\mathrm{HH}^\bullet(A)\stackrel{\cong}\longrightarrow
\mathrm{HH}_{n-\bullet}(A)$$
is given by capping with the volume class $\eta$.
In the general AS-regular algebra case, this could not be true at first glance,
since there is no cap-product between the Hochschild cohomology
and the {\it twisted} Hochschild homology.
Nevertheless, Theorem \ref{thm:firstmain} says that after choosing appropriate
chain complexes,
there does exist a volume class 
such that capping with it gives
the isomorphism \eqref{eq:VdBNCPD}.
\end{remark}

\subsection{Examples}

In this subsection, we study two examples of AS-regular algebras
with nontrivial Nakayama automorphism, and give their
twisted bi-symplectic structure explicitly.

\begin{example}\label{example:2dimAS}
Let $A= k\langle x_1,\cdots, x_n\rangle/(f)$,
where 
$$
f=(x_1,\cdots, x_n)M(x_1,\cdots, x_n)^T, M\in \mathrm{GL}_n(k),
$$ 
and $(f)$ means the ideal generated by $f$.
Dubois-Violette proved in \cite{DV2} that
$A$ is a Koszul AS-regular algebra of dimension $2$. 
The Koszul dual algebra $A^!$ is graded Frobenius of global 
dimension $2$ with
$$
A^!=k\oplus A^!_1\oplus A^!_2,
$$
where $A^!_1$ is the $k$-linear dual space of 
$\mathrm{Span}_k\{x_1,\cdots, x_n\}$ 
with the corresponding dual basis $\{\tilde x_1,\cdots, \tilde x_n\}$ and 
$A^!_2=(A^!_1\otimes A^!_1)/\{\alpha\in V^*\otimes V^*|\alpha(f)=0\}$,
which is 1-dimensional, say, spanned by $\{z\}$. 
The Frobenius pair $\langle -,-\rangle$ on $A^!$ is given by
$$
\langle a,b\rangle=aMb^T,\quad a=a_1\tilde x_1+\cdots+a_n\tilde x_n,\quad b=b_1\tilde x_1+\cdots+b_n\tilde x_n.
$$
From $\langle a,b\rangle=\langle b,\sigma^*(a)\rangle$ we get
$$
\sigma^*(a)=(\tilde x_1,\cdots,\tilde x_n)M^{-1}M^T (a_1,\cdots, a_n)^T, \quad \sigma^*(z)=z.
$$

By Van den Bergh \cite[Theorem 9.2]{VdBExist}, the Nakayama automorphism 
${\sigma}$ of $A$ is
$$
{\sigma}=\epsilon^{2+1}(\sigma^{*T})^{-1},
$$
where $\epsilon(a)=(-1)^ma$ for $a\in A_m$.
On $A_1$, for $x=\sum k_i x_i$, $k_i\in k$, 
$$
{\sigma}(x)=-(x_1,\cdots, x_n)M^TM^{-1}(k_1,\cdots, k_n)^T.
$$

Next we give the explicit twisted bi-symplectic structure on $A$. 
The Koszul dual coalgebra $A^\textup{!`}$ has the form 
$$
A^\textup{!`}=k\oplus A^{\textup{!`}}_1\oplus A^{\textup{!`}}_2
$$
The volume form $\eta=(x_1,\cdots, x_n)M(x_1,\cdots, x_n)^T=\sum_{i,j=1}^nm_{ij}x_ix_j$,
and therefore the twisted bi-symplectic structure is given by
$$
\bm\omega=d (\mathbf 1)\otimes d \big(\sum_{i,j}m_{ij}x_ix_j\big)+\sum_{i,j}m_{ij}d (x_i)\otimes d (x_j) 
+ d\big(\sum_{i,j}m_{ij} x_ix_j\big)\otimes d (\mathbf 1).
$$
\end{example}

\begin{example}[Quantum affine space]
Let $Q=\left(\begin{array}{ccc}
q_{11}&\cdots & q_{1n}\\
\vdots&\ddots&\vdots\\
q_{n1}&\cdots & q_{nn}
\end{array}
\right)$ be an $n\times n$ matrix over $k$ with 
$q_{ii}=1, q_{ij}q_{ji}=1$, for $1\leq i,j\leq n$. 
Let $A=k\langle x_1,\cdots, x_n\rangle/(x_jx_i-q_{ij}x_ix_j)$. 
In \cite[Proposition 4.1]{LWW2}, 
Liu, Wang and Wu proved that $A$ is a twisted Calabi-Yau algebra with
the Nakayama automorphism $\sigma$ given by
$$
\sigma(x_i)=(\Pi_{j=1}^nq_{ji})x_i,\quad i=1,\cdots,n.
$$
The Koszul dual algebra $A^!$ of $A$ is
$$
A^!=k\langle \tilde x_1,\cdots,\tilde x_n\rangle/(q_{ij}\tilde x_j\tilde x_i+\tilde x_i\tilde x_j|1\leq i,j\leq n).
$$
The volume form of $R=\Omega(A^{\textup{!`}})$ 
is
$$
\sum_{\sigma=(i_1,i_2)\cdots (i_k, i_{k+1})\in S_n}
\mathrm{sgn}(\sigma) ( q_{i_1 i_2}\cdots q_{i_ki_{k+1}}) x_{\sigma(1)}\cdots x_{\sigma(n)},
$$
from which we can write down the twisted symplectic form explicitly. 
For example,
for $n=2$, we have $Q=\left(\begin{array}{cc}
1&q_{12}\\
q_{21}& 1
\end{array} \right)$, then
$$
\sigma(x_1)=q_{21}x_1,\quad \sigma(x_2)=q_{12}x_2.
$$
Similarly to Example \ref{example:2dimAS}, the twisted symplectic structure is 
\begin{eqnarray*}
\bm\omega&=&d (\mathbf{1})\otimes d (x_2x_1-q_{12}x_1x_2)+d (x_2)
\otimes d(x_1)\\
&&-q_{12}d (x_1)\otimes d (x_2)
+d (x_2x_1-q_{12}x_1x_2)\otimes d(\mathbf{1}).
\end{eqnarray*}
For $n>2$,
the method is the same, and we leave it to the interested reader.
\end{example}

\begin{remark}
In Theorem \ref{thm:firstmain}, we have assumed that the AS-regular algebra
is Koszul. In fact, by the same technique, the theorem holds for $N$-Koszul
AS-regular algebras, too, such as the {\it down-up algebra} introduced by Benkart and Roby
in \cite{BR}.
\end{remark}

\section{Derived representation schemes}\label{sect:DRep}

From now on we study the twisted symplectic
structure on the derived representation schemes
of a Koszul AS-regular algebra. Derived representation schemes
were first introduced by Berest, Khachatryan and Ramadoss in \cite{BKR}
(see also \cite{BFPRW1,BFPRW2,BFR} for further discussions
and applications).
The goal of this and the subsequent section is to prove
Theorem \ref{thm:secondmain}.
At the same time, these two sections
also improve some results presented in \cite{CE} if we 
take the algebra to be Calabi-Yau, and hence serves
as supplement to it. 

\subsection{Representation functor of DG algebras}

Let $A$ be a DG $k$-algebra.
Let $V$ be a chain complex of finite total dimension.
Consider the following representation functor
\begin{equation}\label{functor:representationofDGAs}
\mathrm{Rep}_V(A): \mathsf{CDGA}\to\mathsf{Sets},\quad
B\mapsto \mathrm{Hom}_{\mathsf{DGA}}(A, \underline{\mathrm{End}}\, V\otimes B),
\end{equation}
where $\underline{\mathrm{End}}\, V$ is the DG algebra
of endomorphisms of $V$. 
It is proved in \cite[Theorem 2.1]{BKR} that
this functor is representable. That is, there exists a DG commutative algebra $A_V$,
which only depends on $A$ and $V$, such that
$$
\mathrm{Hom}_{\mathsf{DGA}}(A, \mathrm{End}\, V\otimes B)
=\mathrm{Hom}_{\mathsf{CDGA}}(A_V, B).
$$
In other words, we may view $\mathrm{Rep}_V(A)$ as a DG affine scheme,
and $A_V=\mathscr O(\mathrm{Rep}_V(A))$.

The rough idea of Berest et al in \cite{BKR} is as follows.
Let
$\mathsf{DGA}_{\mathrm{End}(V)}$ be the category of
DG algebras under $\underline{\mathrm{End}}\, V$, that is,
the objects of $\mathsf{DGA}_{\mathrm{End}(V)}$
are DG algebra maps $\underline{\mathrm{End}}\, V\to A$
and the morphisms are commutative triangles
$$
\xymatrix{
&\underline{\mathrm{End}}\, V\ar[ld]\ar[rd]&\\
A\ar[rr]&&B.
}
$$
Then the following functor
$$\mathcal G: \mathsf{DGA}_k\to\mathsf{DGA}_{\mathrm{End}(V)},\quad
A\mapsto\underline{\mathrm{End}}\, V\otimes A$$
is in fact an equivalence of categories, whose
inverse
is
$$
\mathcal G^{-1}: \mathsf{DGA}_{\mathrm{End}(V)}\to\mathsf{DGA}_k,\quad
(\underline{\mathrm{End}}\, V\to A)\mapsto A^{\underline{\mathrm{End}}(V)},
$$
where $A^{\underline{\mathrm{End}}(V)}$ is the centralizer of $\underline{\mathrm{End}}\, V$ in $A$;
see \cite[Lemma 2.1]{BKR} for more details.

Now introduce the following two functors
\begin{eqnarray}
&&\sqrt[V]{-}: \mathsf{DGA}_k\to\mathsf{DGA}_k,\quad A\mapsto 
(\underline{\mathrm{End}}\, V\ast_k A)^{\underline{\mathrm{End}}\, V},\\
&&(-)_V: \mathsf{DGA}_k\to\mathsf{CDGA}_k,\quad A\mapsto(\sqrt[V]{A})_{\natural\natural},
\end{eqnarray}
where $\ast$ is the coproduct of two $k$-algebras, and $(-)_{\natural\natural}$ is the ``commutativization" of the algebra,
namely, taking the quotient of the algebra by the two-sided ideal generated by the commutators.
The following is proved in \cite[Proposition 2.1]{BKR}:

\begin{proposition}\label{prop:twoadjunctions}
For any $A, B\in\mathsf{DGA}_k$ and $C\in\mathsf{CDGA}_k$, there are natural bijections:
\begin{enumerate}
\item[$(1)$] $\mathrm{Hom}_{\mathsf{DGA}_k}(\sqrt[V]{A},B)
\cong\mathrm{Hom}_{\mathsf{DGA}_k}(A,\underline{\mathrm{End}}\, V\otimes B)$;
\item[$(2)$] $\mathrm{Hom}_{\mathsf{CDGA}_k}(A_V, C)
\cong\mathrm{Hom}_{\mathsf{DGA}_k}(A,\underline{\mathrm{End}}\, V\otimes C)$.
\end{enumerate}
\end{proposition}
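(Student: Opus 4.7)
The plan is to deduce both bijections formally from the equivalence of categories
$$\mathcal{G}:\mathsf{DGA}_k\xrightarrow{\;\simeq\;}\mathsf{DGA}_{\underline{\mathrm{End}}(V)},\quad B\longmapsto\underline{\mathrm{End}}\,V\otimes B,$$
recalled just before the statement, combined with the universal property of the coproduct $\ast_k$ in $\mathsf{DGA}_k$ and, for part (2), the abelianization--forgetful adjunction between $\mathsf{DGA}_k$ and $\mathsf{CDGA}_k$.

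For part (1), I would unpack the definition $\sqrt[V]{A}=(\underline{\mathrm{End}}\,V\ast_k A)^{\underline{\mathrm{End}}(V)}=\mathcal{G}^{-1}(\underline{\mathrm{End}}\,V\ast_k A)$, where the structure map $\underline{\mathrm{End}}\,V\to\underline{\mathrm{End}}\,V\ast_k A$ is the canonical coproduct inclusion. Because $\mathcal{G}$ is an equivalence and $\mathcal{G}(B)=\underline{\mathrm{End}}\,V\otimes B$, this gives a natural bijection
$$\mathrm{Hom}_{\mathsf{DGA}_k}\!\bigl(\sqrt[V]{A},B\bigr)\;\cong\;\mathrm{Hom}_{\mathsf{DGA}_{\underline{\mathrm{End}}(V)}}\!\bigl(\underline{\mathrm{End}}\,V\ast_k A,\;\underline{\mathrm{End}}\,V\otimes B\bigr).$$
The right-hand side consists of DG algebra maps $\underline{\mathrm{End}}\,V\ast_k A\to\underline{\mathrm{End}}\,V\otimes B$ whose restriction to $\underline{\mathrm{End}}\,V$ is the canonical inclusion $x\mapsto x\otimes 1$. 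By the universal property of the free product $\ast_k$ in $\mathsf{DGA}_k$, giving such a map is the same as giving a DG algebra map $A\to\underline{\mathrm{End}}\,V\otimes B$, which yields (1).

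For part (2), I would invoke the standard adjunction in which the abelianization functor $(-)_{\natural\natural}:\mathsf{DGA}_k\to\mathsf{CDGA}_k$ is left adjoint to the inclusion $\mathsf{CDGA}_k\hookrightarrow\mathsf{DGA}_k$. Since $A_V=(\sqrt[V]{A})_{\natural\natural}$ and $C$ is commutative, this yields
$$\mathrm{Hom}_{\mathsf{CDGA}_k}(A_V,C)\;\cong\;\mathrm{Hom}_{\mathsf{DGA}_k}\!\bigl(\sqrt[V]{A},C\bigr),$$
and applying (1) with $B=C$ gives the stated bijection.

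The calculations are formal; the only point requiring genuine care is verifying that the universal property of $\ast_k$ transfers correctly to the under-category $\mathsf{DGA}_{\underline{\mathrm{End}}(V)}$ and interacts as expected with the centralizer operation $(-)^{\underline{\mathrm{End}}(V)}$. I expect the main conceptual step to be the compatibility check that, under the equivalence $\mathcal{G}$, the free product $\underline{\mathrm{End}}\,V\ast_k A$ represents the functor sending $X\in\mathsf{DGA}_{\underline{\mathrm{End}}(V)}$ to $\mathrm{Hom}_{\mathsf{DGA}_k}(A,X)$; once this is in place, both bijections are immediate, and naturality in $A$, $B$, and $C$ is tautological from the naturality of the coproduct, the equivalence, and the abelianization adjunction.
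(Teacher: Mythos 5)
Your proof is correct and follows essentially the same route as the argument in \cite{BKR} (Proposition 2.1 there), which the paper cites without reproducing: part (1) combines the universal property of the free product $\ast_k$ (i.e.\ the adjunction between $\underline{\mathrm{End}}\,V\ast_k(-)$ and the forgetful functor from the under-category) with the Morita-type equivalence $\mathcal{G}$, and part (2) stacks the abelianization adjunction $(-)_{\natural\natural}\dashv(\mathsf{CDGA}_k\hookrightarrow\mathsf{DGA}_k)$ on top of (1). No gaps.
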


The second statement of this proposition exactly says that $A_V$ represents the functor
\eqref{functor:representationofDGAs}.
Now fixing $V$, we get a functor
$$
(-)_V: \mathsf{DGA}_k\to\mathsf{CDGA}_k,\quad A\mapsto A_V.
$$
Then \cite[Theorem 2.2]{BKR} says that $(-)_V$ has
a right adjoint
$$\underline{\mathrm{End}}\, V\otimes-: \mathsf{CDGA}_k\to\mathsf{DGA}_k,
\quad B\mapsto \underline{\mathrm{End}}\, V\otimes B
$$
such that
\begin{equation}
\label{functors:QuillenpairofRep}
(-)_V: \mathsf{DGA}_k\rightleftarrows \mathsf{CDGA}_k:\underline{\mathrm{End}}\, V\otimes-
\end{equation}
form a Quillen pair, and hence can be lifted
to their homotopy categories.

\subsection{Quillen's adjunction theorem}

We recommend the survey of Dwyer and Spalinski \cite{DS} for
an introduction to model categories.

Suppose $\mathcal A$ is a model category, then the homotopy category $\mathsf{Ho}(\mathcal A)$
of $\mathcal A$
is the category where the objects remain the same as those in $\mathcal A$,
and the morphisms for two objects, say $A$ and $B$,
are given by
\begin{equation*}
\mathrm{Hom}_{\mathsf{Ho}(\mathcal A)}(A,B)
:=\mathrm{Hom}_{\mathcal A}(QA,QB)/\mbox{quasi-isomorphisms},
\end{equation*}
where $QA$ and $QB$ are the {\it cofibrant resolutions} of $A$ and $B$ respectively
(for more details see \cite[\S5]{DS}).
Both $\mathsf{DGA}_k$ and $\mathsf{CDGA}_k$ are model categories,
and their homotopy categories are denoted by
$\mathsf{Ho}(\mathsf{DGA}_k)$ and $\mathsf{Ho}(\mathsf{CDGA}_k)$ respectively.

Now suppose $\mathcal A, B$ are two model categories, and two functors
$$F: \mathcal A\rightleftarrows \mathcal B: G$$
form a Quillen pair. Then Quillen's adjunction theorem says that 
the derived functors
$$\bm{L}F: \mathsf{Ho}(\mathcal A)\rightleftarrows \mathsf{Ho}(\mathcal B): \bm RG$$
exist, and again form a Quillen pair.
The functor $\bm LF$ is given by $\bm LF(A)=\gamma F(QA)$,
where $QA\to A$ is a(ny) cofibrant resolution of $A$,
and $\gamma: \mathcal A\to\mathsf{Ho}(\mathcal A)$
is the natural functor from $\mathcal A$ to its homotopy category,
which is the identity on objects,
and sends morphisms to the homotopy equivalence classes
of their liftings on cofibrant resolutions.

\subsection{Derived representation schemes}

Since the functors \eqref{functors:QuillenpairofRep}
form a Quillen pair, from the above argument 
they have the associated derived functors.

\begin{definition}[Berest, Khachatryan and Ramadoss \cite{BKR}]
For a chain complex $V$ of finite total dimension, the derived functor
$$\bm{L}(-)_V: 
\mathsf{Ho}(\mathsf{DGA})\to\mathsf{Ho}(\mathsf{CDGA}), A\mapsto (QA)_V$$
is called the {\it derived representation functor} in $V$.
For $A\in\mathsf{Ho}(\mathsf{DGA}_k)$, the DG affine scheme $\mathsf{Spec}(\bm L(A)_V)$ of $\bm L(A)_V$ is called
the {\it derived representation scheme}\footnote{In \cite{BKR}, the {\it derived representation
scheme} of $A$ is simply the DG commutative algebra $\bm L(A)_V$,
and is also denoted by $\mathrm{DRep}_V(A)$. Here we reserve this terminology
to mean its affine scheme, which is consistent with Yeung in 
\cite{Yeung}.} of $A$ in $V$, and is also denoted by $\mathrm{DRep}_V(A)$.
The homology $\mathrm H_\bullet(\bm L(QA)_V)$ is called the {\it representation homology} of $A$.
\end{definition}

The interested reader may also refer to Ciocan-Fontanine and Kapranov \cite{CFK}
for more details about derived schemes.

Suppose $A$ is an associative algebra, viewed as a DG algebra with zero differential,
then any DG free resolution of $A$ is a cofibrant resolution.
If we view the cofibrant resolution of $A$ as
the best approximation of $A$ by
DG free algebras, then $\mathrm{DRep}_V(-)$
best approximates $\mathrm{Rep}_V(-)$. In particular,
the zeroth homology
$$\mathrm H_0(\mathrm{DRep}_V(A))=\mathscr O(\mathrm{Rep}_V(A))$$
(see \cite[Theorem 2.5]{BKR}).

\begin{example}\label{ex:DRepofquasifree}
Assume $V=k^d$ and $A$ is an object in $\mathsf{Ho}(\mathsf{DGA}_k)$.
Suppose
$(R, \partial)=(k\langle x^{\alpha}\rangle_{\alpha\in\mathcal I},\partial)$
is a cofibrant resolution of $A$.
We construct a DG commutative algebra as follows:
to each $x^\alpha$ are associated variables
$x^\alpha_{i j}$, $1\le i, j\le d$, with the same grading
as $x^\alpha$. Consider the assignments
$x^\alpha\mapsto (x^\alpha_{i j})$, then the differential on these variables
are assigned 
such that
$$
\partial (x^\alpha_{i j})=(\partial x^\alpha)_{i j}.$$
$\mathrm{DRep}_V(A)$
is an object in $\mathsf{Ho}(\mathsf{CDGA})$
isomorphic to 
$(k[x^{\alpha}_{i j}]|_{\alpha\in\mathcal I},\partial)$.
\end{example}

From now on, we always assume $V$ is a vector space, that is, $V=k^d$ for some $n\in\mathbb N$.
$\mathrm{DRep}_V(A)$ is sometimes also denoted by $\mathrm{DRep}_d(A)$.

\subsection{The $\mathrm{GL}$-invariant subfunctor}

Observe that the general linear group $\mathrm{GL}(V)$, as a subspace of $\mathrm{End}\, V$,
acts from the right on the latter by conjugation $\alpha\mapsto g^{-1}\alpha g$, for all $g\in\mathrm{GL}(V)$.
It induces a right action on the functor $\mathrm{End}\, V\otimes-: \mathsf{CDGA}_k\to\mathsf{DGA}_k$.
Through the adjunction in Proposition \ref{prop:twoadjunctions}(2)
we get an action of $\mathrm{GL}(V)$ from the left
on the representation functor $(-)_V:\mathsf{DGA}_k\to \mathsf{CDGA}_k$.
Passing to the homotopy category, we get a $\mathrm{GL}(V)$-action
on $\mathrm{DRep}_n(A)$.

Thus we may consider the invariant subfunctor
\begin{equation}\label{functor:invariant}
(-)_V^{\mathrm{GL}}:\mathsf{DGA}_k\to\mathsf{CDGA}_k,\quad A\mapsto A_V^{\mathrm{GL}}.
\end{equation}
It is showed in \cite{BKR} that such a functor does not seem to have a right adjoint, and therefore
the Quillen Adjunction Theorem does not apply. Nevertheless, the authors showed
the following:

\begin{theorem}[\cite{BKR} Theoremn 2.6]
$(1)$ The functor \eqref{functor:invariant} has a total left derived functor
$$\bm L(-)^{\mathrm{GL}}_V:\mathsf{Ho}(\mathsf{DGA}_k)\to\mathsf{Ho}(\mathsf{CDGA}_k).$$

$(2)$ For any $A$ in $\mathsf{DGA}_k$, there is a natural isomorphism of graded algebras
$$\mathrm H_\bullet(\bm L(A)_V^{\mathrm{GL}})
\cong\mathrm H_\bullet(\bm L(A)_V)^{\mathrm{GL}}.$$
\end{theorem}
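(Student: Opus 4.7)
The plan is to bypass the absence of a right adjoint for $(-)_V^{\mathrm{GL}}$ by exploiting the reductivity of $\mathrm{GL}(V)$ in characteristic zero. Since $(-)_V$ is already the left member of the Quillen pair \eqref{functors:QuillenpairofRep}, the functor $(-)_V^{\mathrm{GL}}$ factors as $A\mapsto A_V$ followed by $\mathrm{GL}(V)$-invariants, and my aim is to show that this composite sends weak equivalences between cofibrant DG algebras to quasi-isomorphisms, so that its left derived functor exists and is computed by cofibrant replacement.

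For part (1), I would invoke Ken Brown's lemma: it suffices to prove that whenever $f\colon A\to B$ is a weak equivalence between cofibrant objects in $\mathsf{DGA}_k$, the induced map $f_V^{\mathrm{GL}}\colon A_V^{\mathrm{GL}}\to B_V^{\mathrm{GL}}$ is a quasi-isomorphism. By the Quillen pair property $f_V\colon A_V\to B_V$ is already a $\mathrm{GL}(V)$-equivariant quasi-isomorphism of DG commutative algebras. The key point to establish is that this $\mathrm{GL}(V)$-action is \emph{rational}, i.e.\ each $A_V$ is a union of finite-dimensional rational $\mathrm{GL}(V)$-subrepresentations. This follows from the explicit description in Example \ref{ex:DRepofquasifree}: for a DG free resolution $R=k\langle x^\alpha\rangle\stackrel{\simeq}\to A$ the algebra $R_V$ is the polynomial ring on the matrix entries $x^\alpha_{ij}$, on which $\mathrm{GL}(V)$ acts by simultaneous conjugation, and each monomial lies in a finite-dimensional tensor power of the defining representation and its dual. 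Since $k$ has characteristic zero and $\mathrm{GL}(V)$ is reductive, the invariants functor is exact on rational $\mathrm{GL}(V)$-modules, hence commutes with the formation of homology:
\begin{equation}\label{eq:invhomcommute}
\mathrm H_\bullet(A_V^{\mathrm{GL}})\cong \mathrm H_\bullet(A_V)^{\mathrm{GL}}.
\end{equation}
The equivariance of $f_V$ then forces $f_V^{\mathrm{GL}}$ to be a quasi-isomorphism, establishing the existence of $\bm L(-)_V^{\mathrm{GL}}$.

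For part (2), I would apply \eqref{eq:invhomcommute} to a functorial cofibrant replacement $QA\stackrel{\simeq}\to A$, so that $\bm L(A)_V^{\mathrm{GL}}\simeq (QA)_V^{\mathrm{GL}}$ and $\bm L(A)_V\simeq (QA)_V$, yielding the desired natural graded-algebra isomorphism. Naturality in $A$ follows from the naturality of $Q$ and of \eqref{eq:invhomcommute}; the fact that this is an isomorphism of graded \emph{algebras} (not merely of graded $k$-modules) is automatic because invariants respect products and the multiplication on homology is induced from that on the complex.

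The main obstacle is pinning down the rationality of the $\mathrm{GL}(V)$-action on $A_V$ for a general cofibrant DG algebra, rather than merely for a chosen DG free resolution; once this locally finite structure is in place, the Maschke-type complete reducibility argument and the identification \eqref{eq:invhomcommute} are standard. An additional small check is that the invariants $A_V^{\mathrm{GL}}$ inherit a genuine DG commutative algebra structure and that the model-categorical cofibrant replacement can be chosen $\mathrm{GL}(V)$-equivariantly, so that the entire discussion takes place in $\mathsf{Ho}(\mathsf{CDGA}_k)$ as claimed.
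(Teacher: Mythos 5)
The paper itself gives no proof of this statement---it is quoted verbatim from \cite{BKR} (Theorem 2.6)---and your argument (rationality of the conjugation action of $\mathrm{GL}(V)$ on $A_V$ for cofibrant $A$, exactness of the invariants functor on rational $\mathrm{GL}(V)$-modules in characteristic zero so that invariants commute with homology, and Ken Brown's lemma to deduce existence of the total left derived functor via cofibrant replacement) is precisely the proof given in \cite{BKR}. Your two flagged ``obstacles'' are in fact non-issues: cofibrant objects are retracts of free DG algebras, so rationality persists, and the $\mathrm{GL}(V)$-action on $(QA)_V$ is canonical from the adjunction, so no equivariant choice of replacement is needed.
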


\subsection{Van den Bergh's functor}\label{sect:VdB}


For a DG algebra $A$, in this subsection
we study the differential forms
on $\mathrm{DRep}_V(A)$ and on the derived
quotient stack 
$[\mathrm{DRep}_V(A)/\mathrm{GL}(V)]$
in the sense of To\"en-Vezzosi \cite{TVII},
which
we denote by $\mathpzc{DRep}_V(A)$ 
and call the {\it derived moduli stack of representations} of $A$ in $V$.
We shall not go to the general theory of derived stacks; the interested reader
may refer to \cite{TVII}. However,
in what follows we shall give enough details which is
sufficient for our purpose.
The materials  
are taken from \cite{BFR,BKR,Yeung,Yeung2}.

\subsubsection{De Rham model for $\mathpzc{DRep}_V(A)$}

For derived quotient stacks, To\"en proposed in \cite[\S5]{Toen} the following model
for their differential forms:
Let $G$ be a reductive smooth group scheme over $k$ acting on
an affine derived scheme $Y=\mathsf{Spec}\, A$ and let $X:=[Y/G]$ be the derived quotient stack.
The 1-forms of $Y$, also called the cotangent complex of $Y$,
can be described algebraically (see also \S\ref{subsection:twistedforms}
 for some details). 
The cotangent complex $\mathbb L_{X}$ of $X$, 
pulled back to $Y$, is the fiber $\mathbb L$
of the natural morphism
$
\rho: \mathbb{L}_{Y}\to\mathscr{O}_Y\otimes_k\mathfrak{g}^{\vee}
$,
dual to the infinitesimal action of
$G$ on $Y$. In other words, the fiber $\mathbb L$ is quasi-isomorphic to the complex:
\begin{equation}\label{def:relativecotangent}
\mathbb L\cong\mathrm{cone} \{\mathbb{L}_{Y}
\stackrel{\rho}\to \mathscr{O}_Y\otimes_k\mathfrak{g}^{\vee}\}[-1].
\end{equation}
The group $G$ acts on $Y$ and on the morphism above, and hence on $\mathbb L$.
The complex of 1-forms on $X$ is thus given by 
$\mathbb L^G$. In general, the complex of $p$-forms $\Omega^p(X)$ is described as
\begin{equation}\label{eq:GLequivariantforms}
\Omega^p(X)\simeq \Big(\oplus_{i+j=p}(\wedge^i_A\mathbb L_Y)
\otimes_k\mathrm{Sym}_k^j(\mathfrak g^{\vee})[-j]\Big)^G.
\end{equation}

With To\"en's model described above, 
a model for the de Rham algebra of $\mathpzc{DRep}_V(A)$
is given as follows
(see also Yeung \cite{Yeung,Yeung2} for more details).
Let $R$ be a cofibrant resolution of $A$. Then
the 1-forms of $\mathpzc{DRep}_V(A)$
is the $\mathrm{GL}(V)$-invariant of
\begin{equation}\label{def:mathbbL}
\mathbb L=\mathrm{cone}\{\mathbb L_{\mathrm{DRep}_V(R)}\stackrel{\rho}\to
R_V\otimes_k\mathfrak{gl}(V)^\vee\}[-1].
\end{equation}
We now describe the map $\rho$ in \eqref{def:mathbbL}.
To this end, we go back to see the action
$$\mathrm{GL}(V)\times\mathsf{Spec}\, R_V\to\mathsf{Spec}\,R_V.$$
Since $\mathsf{Spec}\, R_V=\mathrm{Hom}_{\mathsf{DGA}}(R, \mathrm{End}\, V)$,
this action is given by
$$
\mathrm{GL}(V)\times\mathrm{Hom}_{\mathsf{DGA}}(R, \mathrm{End}\, V)\to
\mathrm{Hom}_{\mathsf{DGA}}(R,\mathrm{End}\, V),\quad
(g,x)\mapsto g\circ x\circ g^{-1}.
$$
Infinitesimally, the corresponding Lie action is
\begin{equation}\label{eq:infinitesimalLieaction}
\mathfrak{gl}(V)
\times
\mathrm{Hom}_{\mathsf{DGA}}(R, \mathrm{End}\, V)\to
\mathrm{Hom}_{\mathsf{DGA}}(R, \mathrm{End}\, V),\quad 
(u,x)\mapsto\{u\circ x-x\circ u\},
\end{equation}
where $\circ$ is the matrix multiplication.
It gives a Lie algebra map $\mathfrak{gl}(V)\to\mathfrak X(R_V):=\mathrm{Der}(R_V)$ as follows.
Suppose $x^\alpha_{ij}\in R_V$ (see Example \ref{ex:DRepofquasifree} for the notation), 
which corresponds to a representation
of $R$ in $V$, namely, $x: R\to\mathrm{End}\, V$.
Then according to 
\eqref{eq:infinitesimalLieaction}
we have a map
\begin{equation}\label{eq:infinLiealgebraonelements}
\mathfrak{gl}(V)\to\mathfrak X(R_V),\quad
 u\mapsto\left\{x^\alpha_{ij}\mapsto(u\circ x^\alpha-x^\alpha\circ u)_{ij}\right\}.
 \end{equation}
Dually, we get a map
$$\phi:\Omega^1(R_V) \to \mathfrak{gl}(V)^\vee\otimes R_V$$
which is given by
\begin{equation}\label{eq:dualofLieaction}
\phi(dx^\alpha_{ij})(u)=u\circ x^\alpha_{ij}\stackrel{\eqref{eq:infinLiealgebraonelements}}
=\sum_k u_{ik}x^\alpha_{kj}-x^{\alpha}_{ik}u_{kj}.
\end{equation}
If we identify $\mathfrak{gl}(V)^\vee$ with $\mathfrak{gl}(V)$
via the canonical pairing $\langle u, v\rangle=\mathrm{trace}(uv)$, which we denote by $\mathrm{Tr}^\vee$,
then we obtain from \eqref{eq:dualofLieaction} that
$$
\rho:=\mathrm{Tr}^\vee\circ\phi:  \Omega^1(R_V)  \to \mathfrak{gl}(V)\otimes R_V
$$
is given by
\begin{equation}\label{def:rho}
\rho: dx^\alpha_{ij} \mapsto  
\left(
\begin{array}{ccccc}
0&\cdots&x^\alpha_{1i}&\cdots&0\\
0&\cdots&x^\alpha_{2i}&\cdots&0\\
\cdots&\cdots&\cdots&\cdots&\cdots\\
0&\cdots&x^\alpha_{ni}&\cdots&0
\end{array}
\right)-\left(\begin{array}{cccc}
0&0&\cdots&0\\
\cdots&\cdots&\cdots&\cdots\\
x^\alpha_{j1}&x^\alpha_{j2}&\cdots&x^\alpha_{jn}\\
\cdots&\cdots&\cdots&\cdots\\
0&0&\cdots&0
\end{array}\right),
\end{equation}
where in the right of the arrow,
the first matrix has zero entries except the $j$-th column
and the second one has zero entries except the $i$-th row.
Thus by combining \eqref{def:relativecotangent}--\eqref{def:rho}, we obtain the following (see \cite{Toen,Yeung2}):

\begin{proposition}\label{prop:descriptionof1forms}
With $\mathbb L$ given by \eqref{def:mathbbL} and $\rho$ given by
\eqref{def:rho},
the 1-forms of $\mathpzc{DRep}_V(A)$
is $\mathbb L^{\mathrm{GL}(V)}$, and in general,
$$
\Omega^p(\mathpzc{DRep}_V(A))
\cong (\mathrm{Sym}^p\mathbb L)^{\mathrm{GL}(V)},
$$
where $\mathrm{Sym}^\bullet(-)$ means the graded symmetric product.
\end{proposition}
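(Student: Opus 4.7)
The plan is to reduce the statement to To\"en's general description of differential forms on a derived quotient stack, which is recalled in the paragraph preceding the proposition. With that recipe in place, the only substantive geometric input is an explicit identification of the infinitesimal-action map $\rho$ that appears abstractly in \eqref{def:relativecotangent}; once $\rho$ is pinned down, both assertions of the proposition follow by specialization of the formula \eqref{eq:GLequivariantforms} to $Y=\mathsf{Spec}(R_V)$, $G=\mathrm{GL}(V)$.

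First I would make the $\mathrm{GL}(V)$-action on $\mathrm{DRep}_V(R)=\mathsf{Spec}(R_V)$ explicit. Since $\mathsf{Spec}(R_V)$ represents $B\mapsto\mathrm{Hom}_{\mathsf{DGA}}(R,\underline{\mathrm{End}}\,V\otimes B)$ and $\mathrm{GL}(V)$ acts on $\underline{\mathrm{End}}\,V$ by conjugation, the functor-of-points action is $(g,x)\mapsto g\circ x\circ g^{-1}$, whose derivative is the Lie action $(u,x)\mapsto u\circ x-x\circ u$ of \eqref{eq:infinitesimalLieaction}. Evaluating on the coordinate functions $x^\alpha_{ij}$ of Example \ref{ex:DRepofquasifree} gives the Lie-algebra morphism $\mathfrak{gl}(V)\to\mathrm{Der}(R_V)$ in \eqref{eq:infinLiealgebraonelements}; its dual is the map $\phi$ of \eqref{eq:dualofLieaction}, and composing with the self-duality $\mathfrak{gl}(V)^\vee\cong\mathfrak{gl}(V)$ given by the trace pairing yields the explicit matrix formula \eqref{def:rho} for $\rho$. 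Inserting this into \eqref{def:relativecotangent} with $Y=\mathrm{DRep}_V(R)$ produces precisely the complex $\mathbb L$ of \eqref{def:mathbbL}, and passing to $\mathrm{GL}(V)$-invariants settles the case $p=1$.

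For arbitrary $p$, I would apply \eqref{eq:GLequivariantforms} and then reorganize the right-hand side into a single symmetric power. Because $\mathbb L$ fits in a distinguished triangle
\[
\mathfrak{gl}(V)^\vee\otimes R_V[-1]\longrightarrow\mathbb L\longrightarrow\mathbb L_{\mathrm{DRep}_V(R)},
\]
its graded-symmetric powers carry a canonical filtration whose associated graded is
\[
\bigoplus_{i+j=p}\mathrm{Sym}^i\bigl(\mathbb L_{\mathrm{DRep}_V(R)}\bigr)\otimes_k\mathrm{Sym}^j\bigl(\mathfrak{gl}(V)^\vee[-1]\bigr).
\]
Since $\mathbb L_{\mathrm{DRep}_V(R)}$ sits in de Rham parity (so $\mathrm{Sym}^i$ agrees with $\wedge^i$ in the appropriate sign convention) and the Koszul rule identifies $\mathrm{Sym}^j(\mathfrak{gl}(V)^\vee[-1])$ with $\wedge^j\mathfrak{gl}(V)^\vee[-j]$, each graded piece matches a summand of \eqref{eq:GLequivariantforms}. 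The filtration is $\mathrm{GL}(V)$-equivariant, and in characteristic zero the invariant functor for the reductive group $\mathrm{GL}(V)$ is exact, so we obtain $\Omega^p(\mathpzc{DRep}_V(A))\simeq(\mathrm{Sym}^p\mathbb L)^{\mathrm{GL}(V)}$.

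The main obstacle I anticipate is the bookkeeping of Koszul signs, degree shifts and parities when converting the decomposition \eqref{eq:GLequivariantforms} into a single $\mathrm{Sym}^p\mathbb L$; one has to check that the filtration differentials coming from the cone structure on $\mathbb L$ are compatible with the Chevalley-Eilenberg-type differentials implicit in \eqref{eq:GLequivariantforms}. The geometric content is essentially contained in the explicit computation of $\rho$ via \eqref{eq:infinitesimalLieaction}--\eqref{def:rho}, and the rest is Koszul bookkeeping together with reductivity of $\mathrm{GL}(V)$.
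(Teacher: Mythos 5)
Your proposal follows essentially the same route as the paper: the paper offers no separate proof of this proposition, presenting it as a direct consequence of the explicit computation of $\rho$ in \eqref{eq:infinitesimalLieaction}--\eqref{def:rho} combined with To\"en's formula \eqref{eq:GLequivariantforms} (citing \cite{Toen,Yeung2}), which is exactly your argument. The only addition is your filtration/d\'ecalage reorganization of \eqref{eq:GLequivariantforms} into $\mathrm{Sym}^p\mathbb L$, which the paper leaves implicit; your own caveat about tracking the shifts and parities there is well placed but does not change the substance.
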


\subsubsection{Van den Bergh's functor}

Suppose $A$ is a DG algebra and $V$ is a chain complex.
Van den Bergh introduced a functor
\begin{equation}\label{functor:VdB}
(-)_V^{\mathrm{ab}}: \mathsf{Bimod}\, A\to\mathsf{DGMod}\, A_V, \quad 
M\mapsto M\otimes_{A^e}(\underline{\mathrm{End}}\, V\otimes A_V).
\end{equation}
This functor has a derived version as follows: suppose 
$R\to A$ is a cofibrant resolution of $A$.
Note that $R\otimes_A M$ is a DG $R$-bimodule, and let $F(R, M)$ be its
projective (cofibrant) resolution.
Berest et al proved the following:

\begin{proposition}[\cite{BKR} Corollary 5]\label{prop:VdBformodules}
Let $A\in\mathsf{Alg}$ and $M$ a complex of bimodules over $A$.
The assignment $M\mapsto F(R, M)_V$ induces a well defined functor,
$$
\bm L(-)^{\mathrm{ab}}_V: \mathscr D(\mathsf{Bimod}\, A)\to\mathscr D(\mathsf{DGMod}\, R_V),\quad
M\mapsto (F(R, M))_V^{\mathrm{ab}},
$$
which is independent of the choice of the resolutions $R\to A$ and $F(R, M)\to M$
up to equivalence of $\mathscr D(\mathsf{DGMod}\, R_V)$ inducing
the identity on homology.
\end{proposition}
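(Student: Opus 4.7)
The plan is to prove this by combining two standard derived-functor techniques: first, that for a fixed cofibrant resolution $R \to A$ the underived Van den Bergh functor preserves quasi-isomorphisms between cofibrant DG $R$-bimodules, and second, that a weak equivalence between two cofibrant resolutions of $A$ induces a Quillen equivalence on the associated representation algebras which intertwines the two candidates for $\bm L(-)^{\mathrm{ab}}_V$.

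First I would fix $R \to A$ and verify that the (underived) functor
$$
(-)^{\mathrm{ab}}_V: \mathsf{DGMod}\, R^e \to \mathsf{DGMod}\, R_V, \quad N \mapsto N \otimes_{R^e}(\underline{\mathrm{End}}\, V \otimes R_V),
$$
preserves quasi-isomorphisms between cofibrant $R$-bimodules. Since this functor is a relative tensor product, it is a left adjoint, so by Ken Brown's lemma it suffices to check it preserves cofibrations and sends acyclic cofibrations between cofibrant objects to quasi-isomorphisms. On a free $R$-bimodule $R \otimes W \otimes R$ one computes directly that $(R \otimes W \otimes R)^{\mathrm{ab}}_V \cong \underline{\mathrm{End}}\, V \otimes W \otimes R_V$, which is $R_V$-free; since cofibrant DG $R$-bimodules are retracts of iterated cell extensions built from such free bimodules, the claim follows. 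In particular, any two choices of $F(R,M) \to R \otimes_A M$ are linked by a quasi-isomorphism of cofibrant $R$-bimodules, and applying $(-)^{\mathrm{ab}}_V$ gives a quasi-isomorphism in $\mathscr D(\mathsf{DGMod}\, R_V)$. So the assignment $M \mapsto F(R,M)^{\mathrm{ab}}_V$ descends to a well-defined functor $\bm L(-)^{\mathrm{ab}}_{V,R}$ for each fixed $R$.

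Next I would address independence from $R$. Given two cofibrant resolutions $R, R' \to A$, the lifting property in $\mathsf{DGA}_k$ produces a weak equivalence $\pi: R \to R'$ over $A$, which under the functor $(-)_V$ (a left Quillen functor by the adjunction \eqref{functors:QuillenpairofRep}) yields a quasi-isomorphism of DG commutative algebras $\pi_V: R_V \to R'_V$; extension of scalars along such a quasi-isomorphism is a Quillen equivalence on the module categories. To compare the two derived functors, one uses $\pi$ to transport $F(R,M)$: the base change $F(R,M) \otimes^{\bm L}_R R'$ is a cofibrant resolution of $R' \otimes_A M$ as an $R'$-bimodule, hence quasi-isomorphic to $F(R',M)$. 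Applying $(-)^{\mathrm{ab}}_V$ and using that tensor with $\underline{\mathrm{End}}\,V \otimes (-)_V$ commutes with base change along $\pi$, one obtains a natural quasi-isomorphism $\pi_V^*(F(R,M)^{\mathrm{ab}}_V) \simeq F(R', M)^{\mathrm{ab}}_V$ in $\mathscr D(\mathsf{DGMod}\, R'_V)$, which is the required identification on homology.

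The principal obstacle is the coherence of these comparisons: one must check that the various resolutions, base changes, and applications of $(-)^{\mathrm{ab}}_V$ can be arranged functorially, so that the comparison is not just an isomorphism but a natural one between functors on $\mathscr D(\mathsf{Bimod}\, A)$. This is handled by choosing the comparison through the zig-zag $R \leftarrow R \otimes^{\bm L}_A R' \to R'$ (or equivalently by working with the bar resolution) and invoking that both terms resolve the same object; the formal properties of Quillen pairs then guarantee compatibility up to the prescribed equivalence inducing identity on homology.
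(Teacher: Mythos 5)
The paper does not prove this proposition: it is imported from Berest--Khachatryan--Ramadoss (their Corollary 5, cited as \cite{BKR}), so there is no in-paper argument to compare against. Your reconstruction follows the same standard route as the cited source: show that $(-)^{\mathrm{ab}}_V$ is left Quillen on DG $R$-bimodules via the computation $(R\otimes W\otimes R)^{\mathrm{ab}}_V\cong \underline{\mathrm{End}}\,V\otimes W\otimes R_V$ on free bimodules, apply Ken Brown's lemma to get independence of the choice of $F(R,M)$, and treat independence of $R$ by base change along a lift $\pi\colon R\to R'$ over $A$, using that $\pi_V\colon R_V\to R'_V$ is a quasi-isomorphism so that extension of scalars is a Quillen equivalence identifying the two target categories. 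Two small imprecisions: being a left adjoint is what lets you check left-Quillenness on generating (acyclic) cofibrations, while Ken Brown's lemma is the separate step converting that into preservation of all quasi-isomorphisms between cofibrant bimodules; and for bimodules the base change must be two-sided, $R'\otimes^{\bm L}_R F(R,M)\otimes^{\bm L}_R R'$ rather than the one-sided $F(R,M)\otimes^{\bm L}_R R'$ you wrote. With that correction the commutation of $(-)^{\mathrm{ab}}_V$ with base change, $\bigl(R'^e\otimes_{R^e}N\bigr)\otimes_{R'^e}(\underline{\mathrm{End}}\,V\otimes R'_V)\cong \bigl(N\otimes_{R^e}(\underline{\mathrm{End}}\,V\otimes R_V)\bigr)\otimes_{R_V}R'_V$, goes through as you indicate, and the argument is sound.
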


In \cite{BFR,BKR}, $\bm L(-)^{\mathrm{ab}}_V$ is called the {\it derived Van den Bergh
functor}; sometimes it is also denoted
by $\bm L(-)_V$ if the content is clear.
Due to this proposition, in what follows we shall consider directly DG 
bimodules over $R$ instead of those
over $A$, which is then given by \eqref{functor:VdB} up to homotopy.

\subsubsection{Derived noncommutative tangent and cotangent complexes}

Van den Bergh \cite{VdBNCHam} as well as Berest et al \cite{BKR} proved the following:

\begin{lemma}\label{thm:VdBoftancotan}
Under Van den Bergh's functor,
$$
(\mathbf{\Omega}^1  R)_V=\Omega^1(R_V)\quad\mbox{and}\quad
( \mathbf D\mathrm{er}\, R)_V=\mathrm{Der}(R_V).
$$
\end{lemma}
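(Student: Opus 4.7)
The plan is to establish both identities by verifying that they hold as isomorphisms of representable functors. Since $\mathbf{\Omega}^1 R$ is universal among $R$-bimodules receiving a derivation from $R$, and $\Omega^1(R_V)$ is universal among $R_V$-modules receiving a derivation from $R_V$, it suffices to exhibit a natural bijection on corepresentations.

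For the first identity, I would unwind the Van den Bergh functor and chain together standard adjunctions. For any DG $R_V$-module $N$, write
\begin{align*}
\mathrm{Hom}_{R_V}\big((\mathbf{\Omega}^1 R)^{\mathrm{ab}}_V, N\big)
&\cong \mathrm{Hom}_{R^e}\!\big(\mathbf{\Omega}^1 R,\; \underline{\mathrm{End}}\,V \otimes N\big) \\
&\cong \mathrm{Der}\big(R,\; \underline{\mathrm{End}}\,V \otimes N\big) \\
&\cong \mathrm{Der}(R_V, N) \\
&\cong \mathrm{Hom}_{R_V}\!\big(\Omega^1(R_V), N\big).
\end{align*}
The first isomorphism is tensor-Hom adjunction together with the self-duality $\underline{\mathrm{End}}\,V \cong \underline{\mathrm{End}}\,V^{\vee}$ via the trace pairing (which is allowed since $V$ has finite total dimension). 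The second is the universal property of $\mathbf{\Omega}^1 R$, with the $R$-bimodule structure on $\underline{\mathrm{End}}\,V \otimes N$ coming from the universal representation $\pi_V : R \to \underline{\mathrm{End}}\,V \otimes R_V$. The third is the derivation analogue of Proposition \ref{prop:twoadjunctions}(2): a derivation $R \to \underline{\mathrm{End}}\,V \otimes N$ corresponds, by applying Proposition \ref{prop:twoadjunctions}(2) to the square-zero extension $R_V \oplus N$, to a derivation $R_V \to N$. The last is the universal property of $\Omega^1(R_V)$.

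For the second identity, I would use the presentation $\mathbf D\mathrm{er}\,R \cong \mathrm{Hom}_{R^e}(\mathbf{\Omega}^1 R, R \otimes R)$, where the Hom is computed with respect to the outer $R$-bimodule structure on $R \otimes R$ and the resulting $R$-bimodule structure on $\mathbf D\mathrm{er}\,R$ comes from the inner structure on $R\otimes R$. Applying Van den Bergh's functor and using the first identity, one gets
\[
(\mathbf D\mathrm{er}\,R)_V \;\cong\; \mathrm{Hom}_{R_V}\!\big(\Omega^1(R_V), R_V\big) \;=\; \mathrm{Der}(R_V),
\]
where the middle step uses that $\mathbf{\Omega}^1 R$ is a free $R^e$-module on the generators of $R$ (so Hom and tensor commute through finite bookkeeping), and the inner/outer bimodule distinction precisely produces the correct $R_V$-module structure on the target after tracing out against $\underline{\mathrm{End}}\,V$. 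Concretely, a double derivation $\Theta$ with $\Theta(r)=\sum \Theta(r)'\otimes \Theta(r)''$ is sent to the derivation $r_{ij}\mapsto \sum_k \Theta(r)''_{kj}\,\Theta(r)'_{ik}$ on $R_V$.

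The main obstacle is the careful handling of the two distinct bimodule structures (inner versus outer) on $R \otimes R$ in the definition of $\mathbf D\mathrm{er}\,R$: one is used for the Hom description, the other for the bimodule structure on $\mathbf D\mathrm{er}\,R$ itself, and both must be tracked through the Van den Bergh functor in order for the trace pairing on $\underline{\mathrm{End}}\,V$ to produce the commutative multiplication on $R_V$ acting on $\mathrm{Der}(R_V)$. Once the universal property argument for $\mathbf{\Omega}^1$ is in place, the tangent identity reduces to showing that this bookkeeping is consistent, after which the augmented versions $\widehat{\mathbf{\Omega}}^1 R$ and $\widehat{\mathbf D\mathrm{er}}\,R$ are handled by applying the same argument to the defining cones.
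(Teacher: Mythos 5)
Your argument is correct and is essentially the standard representability argument from the references (\cite[Proposition 3.3.4]{VdBNCHam} and \cite{BKR}) that the paper cites in lieu of giving its own proof: the cotangent identity via the universal property of K\"ahler differentials together with the square-zero-extension form of the adjunction, and the tangent identity by dualizing. The one hypothesis worth stating explicitly is that the second identity needs $\mathbf{\Omega}^1 R$ to be a finitely generated free $R^e$-module so that $\mathrm{Hom}_{R^e}(-,R\otimes R)$ commutes with Van den Bergh's functor --- which holds in the paper's setting since $R=\Omega(A^{\textup{!`}})$ with $A^{\textup{!`}}$ finite-dimensional --- and your ``finite bookkeeping'' remark does acknowledge this.
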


\begin{proof}
See \cite[Proposition 3.3.4]{VdBNCHam} for the associative algebra case, and \cite[pages 661 and 671]{BKR} 
for the derived case.
\end{proof}

Now, it is direct to see
$$(R\otimes R)_V=(R\otimes R)\otimes_{R^e}(\mathrm{End}\, V\otimes R_V)=R_V\otimes \mathrm{End}\, V.$$
Combining this with Lemma \ref{thm:VdBoftancotan}
we obtain the following identity
\begin{equation}\label{eq:augmentedncform}
\big(\mathrm{cone}\{\mathbf{\Omega}^1 R\to R\otimes R\}\big)_V
=\mathrm{cone}\{\Omega^1(R_V)\to R_V\otimes\mathrm{End}\, V\}.
\end{equation}

\begin{lemma}\label{claim:idoftwocotangentmodels}
We have the following isomorphism:
\begin{equation}\label{eq:idoftwocotangentmodels}
\mathrm{cone}\{\Omega^1(R_V)\to R_V\otimes\mathrm{End}\, V\}
\cong
\mathrm{cone}\{\mathbb L_{\mathrm{DRep}_V(A)}\stackrel{\rho}\to R_V\otimes_k\mathfrak{gl}(V)^\vee\}.
\end{equation}
\end{lemma}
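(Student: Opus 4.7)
The plan is to identify the two cones term-by-term and then check that the two horizontal maps agree under these identifications. There are essentially three things to verify, and the nontrivial content lives in the third.

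First, since $R$ is a cofibrant resolution of $A$, the DG commutative algebra $R_V$ is cofibrant in $\mathsf{CDGA}_k$, so its cotangent complex is just its module of Kähler differentials: $\mathbb L_{\mathrm{DRep}_V(A)}\simeq \Omega^1(R_V)$. Combined with Lemma \ref{thm:VdBoftancotan}, this gives the identification of the left-hand terms of the two cones. Second, the trace pairing $\langle u,v\rangle=\mathrm{Tr}(uv)$ on $\mathfrak{gl}(V)=\mathrm{End}\,V$ is nondegenerate, so $\mathrm{Tr}^\vee:\mathrm{End}\,V\xrightarrow{\cong}\mathfrak{gl}(V)^\vee$, giving the identification $R_V\otimes\mathrm{End}\,V\cong R_V\otimes\mathfrak{gl}(V)^\vee$ of the right-hand terms.

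The heart of the proof is the third step: checking that the map $\iota_V:\Omega^1(R_V)\to R_V\otimes\mathrm{End}\,V$ obtained by applying Van den Bergh's functor $(-)_V^{\mathrm{ab}}$ to the canonical inclusion $\mathbf{\Omega}^1 R\hookrightarrow R\otimes R$ agrees, via $\mathrm{Tr}^\vee$, with the map $\rho$ of \eqref{def:rho}. The approach is to compute $\iota_V$ on generators. Using the canonical isomorphism $(R\otimes R)\otimes_{R^e}(\mathrm{End}\,V\otimes R_V)\cong\mathrm{End}\,V\otimes R_V$ sending $(a\otimes b)\otimes\xi\mapsto\pi(a)\,\xi\,\pi(b)$, where $\pi:R\to\mathrm{End}\,V\otimes R_V$ is the universal representation $\pi(r)=\sum_{ij}e_{ij}\otimes r_{ij}$, the inclusion $dr=r\otimes 1-1\otimes r$ induces after tensoring with $e_{ij}\otimes 1$ the commutator
\begin{equation*}
(dx^\alpha)\otimes(e_{ij}\otimes 1)\;\longmapsto\;[\pi(x^\alpha),\,e_{ij}\otimes 1]
=\sum_{k}e_{kj}\otimes x^\alpha_{ki}\;-\;\sum_{l}e_{il}\otimes x^\alpha_{jl}.
\end{equation*}
Here one identifies the generator $dx^\alpha_{ij}$ of $\Omega^1(R_V)$ with $(dx^\alpha)\otimes(e_{ij}\otimes 1)$ in $(\mathbf{\Omega}^1 R)_V^{\mathrm{ab}}$ under the isomorphism from Lemma \ref{thm:VdBoftancotan}. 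Comparing the right-hand side with the explicit matrix formula \eqref{def:rho} for $\rho(dx^\alpha_{ij})$ — which records exactly a matrix supported on the $j$-th column with entries $x^\alpha_{\cdot\,i}$ minus a matrix supported on the $i$-th row with entries $x^\alpha_{j\,\cdot}$ — one sees that the two agree after applying $\mathrm{Tr}^\vee$; equivalently, both match $\phi(dx^\alpha_{ij})(u)=\sum_k u_{ik}x^\alpha_{kj}-x^\alpha_{ik}u_{kj}$ from \eqref{eq:dualofLieaction} when paired against an arbitrary $u\in\mathfrak{gl}(V)$.

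I expect the routine part to be bookkeeping the matrix indices and the Van den Bergh tensor products consistently; the only real obstacle is pinning down the precise conventional isomorphism between $\Omega^1(R_V)$ and $(\mathbf{\Omega}^1 R)\otimes_{R^e}(\mathrm{End}\,V\otimes R_V)$ on generators, since the two possible index conventions differ by transposition of $i$ and $j$, and only one of them matches the formula for $\rho$. Once this convention is fixed as above, the compatibility is immediate, the two cones coincide, and \eqref{eq:idoftwocotangentmodels} follows.
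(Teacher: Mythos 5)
Your proposal is correct and follows essentially the same route as the paper: identify the two cone components (using nondegeneracy of the trace pairing for $\mathrm{End}\,V\cong\mathfrak{gl}(V)^\vee$), then compute the image of the generator $dx^\alpha_{ij}$ under Van den Bergh's functor applied to $\mathbf{\Omega}^1R\hookrightarrow R\otimes R$ and match it with the explicit matrix formula \eqref{def:rho}; your commutator $[\pi(x^\alpha),e_{ij}\otimes 1]=\sum_k e_{kj}\otimes x^\alpha_{ki}-\sum_l e_{il}\otimes x^\alpha_{jl}$ is exactly the paper's $x^\alpha\circ e_{ij}-e_{ij}\circ x^\alpha$ in \eqref{eq:theimageofkappa}. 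The only (harmless) difference is that you make explicit the identification $\mathbb L_{\mathrm{DRep}_V(A)}\simeq\Omega^1(R_V)$ via cofibrancy of $R_V$, which the paper leaves implicit.
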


\begin{proof}
We identify $\mathfrak{gl}(V)^{\vee}$ with $\mathfrak{gl}(V)$
and notice that
$$R_V\otimes\mathfrak{gl}(V)\cong R_V\otimes \mathrm{End}\, V.$$
Therefore, the corresponding components of \eqref{eq:idoftwocotangentmodels} on both sides are isomorphic,
so we only need to check that the maps between two cones are the same.

To this end, let us first describe the map in 
$$\mathrm{cone}\{\Omega^1(R_V)\to R_V\otimes\mathrm{End}\, V\},$$
which we denote by $\kappa$.
By definition, we have the following diagram
$$\xymatrixcolsep{4pc}
\xymatrix{
R\otimes A^{\textup{!`}}\otimes R\ar@{^{(}->}[r]^{i}\ar@{~>}[d]^{(-)_V}&R\otimes R\ar@{~>}[d]^{(-)_V}\\
\Omega^1(R_V)\ar[r]^{\kappa}&\mathrm{End}\, V\otimes R_V,
}
$$
where $i$ is the inclusion.
That is, $\kappa=i_V$. From this we have an explicit expression for $\kappa$:
$$\kappa\cong i\otimes id: (R\otimes A^{\textup{!`}}\otimes R)\otimes_{R^e} (\mathrm{End}\, E\otimes R_V)\to
(R\otimes R)\otimes_{R^e}(\mathrm{End}\, E\otimes R_V).$$
Pick an element $dx^\alpha\otimes e_{ij}\in
(R\otimes A^{\textup{!`}}\otimes R)\otimes_{R^e} (\mathrm{End}\, E\otimes R_V)$,
which is identified with $dx^\alpha_{ij}$,
then
\begin{eqnarray}\label{eq:theimageofkappa}
\kappa(dx^{\alpha}_{ij})&=&(x^\alpha\otimes 1-1\otimes x^\alpha)\otimes e_{ij}\nonumber\\
&=&x^{\alpha}\circ e_{ij}-e_{ij}\circ x^{\alpha}\nonumber\\
&=&\left(
\begin{array}{ccccc}
0&\cdots&x^\alpha_{1i}&\cdots&0\\
0&\cdots&x^\alpha_{2i}&\cdots&0\\
\cdots&\cdots&\cdots&\cdots&\cdots\\
0&\cdots&x^\alpha_{di}&\cdots&0
\end{array}
\right)-\left(\begin{array}{cccc}
0&0&\cdots&0\\
\cdots&\cdots&\cdots&\cdots\\
x^\alpha_{j1}&x^\alpha_{j2}&\cdots&x^\alpha_{jd}\\
\cdots&\cdots&\cdots&\cdots\\
0&0&\cdots&0
\end{array}\right),
\end{eqnarray}
where in the second equality,
$\circ$ means the action (representation) of $x^\alpha$
on the corresponding matrices.
Now observe that \eqref{eq:theimageofkappa}
is exactly the same as $\rho$ given by \eqref{def:rho}, from which the lemma follows.
\end{proof}

\begin{theorem}[See also Yeung \cite{Yeung}]\label{thm:tangentandcotangentonDRep}
Suppose $A$ is a DG algebra with a cofibrant resolution $R\to A$.
Then
$$\mathfrak X(\mathpzc{DRep}_V(A))[1]\cong
\big((\widehat{\mathbf{D}\mathrm{er}}\, R[1])_V\big)^{\mathrm{GL}(V)}
$$
and
$$
\Omega^1(\mathpzc{DRep}_V(A))[-1]
=\big((\widehat{\mathbf{\Omega}}^1  R[-1])_V\big)^{\mathrm{GL}(V)}.
$$
\end{theorem}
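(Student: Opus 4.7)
The plan is to verify the two isomorphisms by composing the identifications already established in the excerpt. I will focus on the cotangent side first, since the tangent side follows from it by dualization.

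First I would recall from Proposition \ref{prop:descriptionof1forms} that the complex of $1$-forms on the derived stack is given by the $\mathrm{GL}(V)$-invariants of the shifted cone
\[
\mathbb L = \mathrm{cone}\bigl\{\mathbb L_{\mathrm{DRep}_V(R)} \stackrel{\rho}{\to} R_V \otimes \mathfrak{gl}(V)^{\vee}\bigr\}[-1],
\]
with $\rho$ the explicit matrix expression in \eqref{def:rho}. On the noncommutative side, by Remark \ref{remark:Yeungsaugmentation},
\[
\widehat{\mathbf{\Omega}}^1 R[-1] = \mathrm{cone}\{\mathbf{\Omega}^1 R \to R\otimes R\}[-1].
\]
Applying Van den Bergh's functor $(-)_V$ term by term and using Lemma \ref{thm:VdBoftancotan} together with the computation $(R\otimes R)_V = R_V\otimes \mathrm{End}\,V$ recorded in \eqref{eq:augmentedncform}, I obtain
\[
\bigl(\widehat{\mathbf{\Omega}}^1 R[-1]\bigr)_V \;\cong\; \mathrm{cone}\bigl\{\Omega^1(R_V) \to R_V\otimes \mathrm{End}\, V\bigr\}[-1].
\]
Then Lemma \ref{claim:idoftwocotangentmodels}, whose key content is the explicit verification that the canonical map $\kappa = i \otimes \mathrm{id}$ coincides with $\rho$ on generators $dx^{\alpha}_{ij}$, identifies this cone with $\mathbb L$ up to the same shift. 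Taking $\mathrm{GL}(V)$-invariants on both sides yields the second stated isomorphism.

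For the tangent side, I would use that the noncommutative cotangent resolution is free, so
\[
\widehat{\mathbf D\mathrm{er}}\, R[1] = \mathrm{Hom}_{R^e}\bigl(\widehat{\mathbf{\Omega}}^1 R[-1],\, R\otimes R\bigr),
\]
and hence Van den Bergh's functor converts this into the ordinary tangent complex on $R_V$ via Lemma \ref{thm:VdBoftancotan} (the formula $(\mathbf D\mathrm{er}\,R)_V = \mathrm{Der}(R_V)$), together with the natural identification of the tangent complex $\mathfrak X(\mathpzc{DRep}_V(A))$ with the $\mathrm{GL}(V)$-invariants of the fiber of the infinitesimal action map \eqref{eq:infinitesimalLieaction}. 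Concretely, $\mathfrak X(\mathpzc{DRep}_V(A))[1]$ is obtained (by the To\"en/Yeung prescription dual to \eqref{def:relativecotangent}) as the $\mathrm{GL}(V)$-invariant part of the cocone of $\mathfrak{gl}(V)\otimes R_V \to \mathrm{Der}(R_V)$, and this matches $(\widehat{\mathbf D\mathrm{er}}\,R[1])_V$ after applying $(-)_V$ to the defining short exact sequence for $\widehat{\mathbf D\mathrm{er}}$.

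The main obstacle will be keeping track of the grading conventions and sign shifts when passing between the augmented forms $\widehat{\mathbf{\Omega}}^1 R$, its degree-shifted version, and the stacky cotangent complex $\mathbb L$; in particular, I need to verify that the additional copy of $R\otimes R$ built into the cone matches precisely the extra term $R_V\otimes \mathfrak{gl}(V)^{\vee}$ coming from the infinitesimal $\mathrm{GL}(V)$-action, and that the map $\kappa$ obtained from the noncommutative side agrees on the nose with the infinitesimal-action map $\rho$. Once this matching is verified (which is the content of Lemma \ref{claim:idoftwocotangentmodels}), both isomorphisms follow formally by applying $(-)_V$ and taking $\mathrm{GL}(V)$-invariants, and independence of the cofibrant resolution $R\to A$ is guaranteed by Proposition \ref{prop:VdBformodules}.
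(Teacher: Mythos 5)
Your proposal follows essentially the same route as the paper's proof: Proposition \ref{prop:descriptionof1forms} for the stacky description, then the chain of identifications \eqref{eq:idoftwocotangentmodels}, \eqref{eq:augmentedncform}, \eqref{eq:Yeungsnc1forms} (i.e.\ Lemma \ref{claim:idoftwocotangentmodels}, Lemma \ref{thm:VdBoftancotan} together with $(R\otimes R)_V\cong R_V\otimes\mathrm{End}\,V$, and Remark \ref{remark:Yeungsaugmentation}), with the tangent case handled dually. The paper leaves the tangent side to the reader with the word ``similarly,'' and your sketch of that half is the expected one, so the argument is correct and not genuinely different.
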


\begin{proof}
By Proposition \ref{prop:descriptionof1forms},
we know that
$$\Omega^1(\mathpzc{DRep}_V(A))
=\big(\mathrm{cone}\{\Omega^1(R_V)\to R_V\otimes_k\mathfrak{gl}(V)^{\vee}\}\big)^{\mathrm{GL}(V)}$$
while
\begin{eqnarray*}
\mathrm{cone}\{\Omega^1(R_V)\to R_V\otimes_k\mathfrak{gl}(V)^{\vee}\}
&\stackrel{\eqref{eq:idoftwocotangentmodels}}
=&\mathrm{cone}\{\Omega^1(R_V)\to R_V\otimes\mathrm{End}\,V\}\\
&\stackrel{\eqref{eq:augmentedncform}}=&\mathrm{cone}\{\mathbf{\Omega}^1 R\to R\otimes R\}_V\\
&\stackrel{\eqref{eq:Yeungsnc1forms}}=&
(\widehat{\mathbf{\Omega}}^1  R)_V.
\end{eqnarray*}
This means
$$
\Omega^1(\mathpzc{DRep}_V(A))[-1]
=\big((\widehat{\mathbf{\Omega}}^1 R[-1])_V\big)^{\mathrm{GL}(V)}.
$$
Similarly,
$$
\mathfrak X(\mathpzc{DRep}_V(A))[1]
=\big((\widehat{ \mathbf D\mathrm{er}}\, R[1])_V\big)^{\mathrm{GL}(V)},
$$
whose proof is left to the reader.
This proves
the theorem.
\end{proof}

\subsubsection{The derived Procesi trace map}\label{subsubsect:derivedProcesi}

Prior to Van den Bergh, Procesi gave a map in \cite{Procesi}
$$
\mathrm{Tr}: A_\natural\to (A)_V^{\mathrm{GL}},\, \bar a\mapsto\{\rho\mapsto \mathrm{Trace}(\rho(a))\},
$$
where $\bar a$ means the equivalence class represented by $a\in A$.
This map was latter generalized by Berest et al in \cite{BFR,BKR} to $A$-bimodules as follows:
first, recall that Van den Bergh's functor \eqref{functor:VdB} is
$$(-)_V^{\mathrm{ab}}: \mathsf{Bimod}\, A\to\mathsf{DGMod}\, A_V, \, M\mapsto M_V^{\mathrm{ab}}
:= M\otimes_{A^e}(\mathrm{End}\, V\otimes A_V).$$
There is an analogue of Procesi's map for bimodules (see \cite[(5.11)]{BKR} or \cite[(87)]{BFR})
$$
\mathrm{Tr}_V(M): M\to  \mathrm{End}\, V\otimes M_V^{\mathrm{ab}}\to M_V^{\mathrm{ab}},
$$
which is a trace map, that is, it factors through $M_\natural:=M/[M, A]$.
It therefore gives a morphism of functors
$$\mathrm{Tr}_V: (-)_\natural\to (-)_V.$$
Now, the {\it derived} Procesi map is given by
\begin{equation}\label{def:derivedtracemap}
\mathrm{Tr}_V: \bm L(-)_\natural\to \bm L(-)_V.
\end{equation}
The difference between
$\bm L(M)_V^{\mathrm{ab}}$
and
$\mathrm{Tr}_V(\bm L(M))$
is that,
we would view the former as sheaves
on $\mathrm{DRep}_V(A)$
and the latter as their global sections.
We learned this point of view from Yeung \cite{Yeung,Yeung2}.

\begin{example}\label{traceofforms}
Suppose $R\to A$ is a DG free resolution of $A$,
then
$\widehat{\mathbf{\Omega}}(R)[-1]$ is a DG free $R$-bimodule.
By Lemma \ref{claim:idoftwocotangentmodels},
we have
$$
\Omega^1(\mathpzc{DRep}_V(A))[-1]
=\big((\widehat{\mathbf{\Omega}}^1 R[-1])_V\big)^{\mathrm{GL}(V)}.
$$
Recall that $(\widehat{\mathbf{\Omega}}^1R[-1])_\natural=
\widehat{\mathrm{DR}}^1(R)$.
Then by \eqref{def:derivedtracemap},
the morphism $\mathrm{Tr}_V$ is given by
$$
\mathrm{Tr}_V: \widehat{\mathrm{DR}}^1(R)\to \big((\widehat{\mathbf{\Omega}}^1 R[-1])_V\big)^{\mathrm{GL}(V)},\,
dx^\alpha \mapsto \mathrm{Tr}(dx^{\alpha}\otimes e_{ij})=\displaystyle\sum_{i=1}^d dx^{\alpha}_{ii}.
$$
Similarly, for 2-forms we have
$\mathrm{Tr}_V(dx^\alpha\otimes dx^\beta)=\displaystyle\sum_{i,j=1}^d dx^{\alpha}_{ij}\otimes dx^{\beta}_{ji}.$
\end{example}

\section{Derived representation schemes of AS-regular algebras}\label{sect:applications}

Now suppose $A$ is a Kosul AS-regular algebra of dimension $n$. 
In this section we study the twisted symplectic structure on
$\mathpzc{DRep}_V(A)$,
and prove Theorem \ref{thm:secondmain}.

\subsection{Twisted differential forms}\label{subsection:twistedforms}
The twisted differential forms of DG commutative
algebras are defined similarly to those of DG associative
algebras. Some other studies of twisted differential forms
can be found in, for example, \cite{Andre,KSA,LSQ}.

Let $(A, \mu, \partial)$ be DG commutative algebra over $k$. Recall that the set of K\"ahler 
differential forms of $A$ is
$\Omega^1 A:=I/I^2$,
where
$I=\ker\{\mu: A\otimes A\to A, x\otimes y\mapsto xy\}$.
Now suppose $A$ has
an automorphism $\sigma\in\mathrm{Aut}\, A$. Let 
\begin{eqnarray*}
J:=\ker\{\mu_\sigma: A\otimes A\to A,\, x\otimes y\mapsto x\cdot \sigma(y)\}.
\end{eqnarray*}
and let
$\Omega^1_\sigma A:=J/J^2$,
which is called the set of {\it twisted K\"ahler differential forms} of $A$.
We have the following:

\begin{lemma}\label{lemma:derivationandbimodule}
Let $\Omega^1_\sigma A$
be as above. Then
\begin{enumerate}
\item[$(1)$] for any $a\in A$, the map 
$$d_\sigma:A\to\Omega^1_\sigma A,\,  a\mapsto \sigma(a)\otimes 1-1\otimes a$$
satisfies
$d_\sigma(a b)=d_\sigma(a)\cdot b+\sigma(a)\cdot d_\sigma(b)$, for all $a, b\in A$;

\item[$(2)$] for any $u\in \Omega^1_\sigma A$ and $a\in A$, we have
$$
\sigma(a)\cdot u-(-1)^{|a||u|}u\cdot a=0\in \Omega^1_\sigma A.
$$
\end{enumerate}
\end{lemma}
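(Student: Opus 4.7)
The plan is to verify both assertions by explicit computation in $A \otimes A$, equipped with the standard outer $A$-bimodule structure $u \cdot (x \otimes y) \cdot v = ux \otimes yv$, and then pass to $\Omega^1_\sigma A = J/J^2$ only where the quotient is actually needed.

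For (1), I would first check that $d_\sigma a$ really lies in $J$ (since $\mu_\sigma(\sigma(a)\otimes 1 - 1\otimes a) = \sigma(a) - \sigma(a) = 0$) and then expand:
\[
d_\sigma(a)\cdot b + \sigma(a)\cdot d_\sigma(b) = \bigl(\sigma(a)\otimes b - 1\otimes ab\bigr) + \bigl(\sigma(a)\sigma(b)\otimes 1 - \sigma(a)\otimes b\bigr).
\]
The $\pm\sigma(a)\otimes b$ terms cancel, leaving $\sigma(a)\sigma(b)\otimes 1 - 1\otimes ab = \sigma(ab)\otimes 1 - 1\otimes ab = d_\sigma(ab)$. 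Notice that this identity already holds in $J$, so the quotient by $J^2$ plays no role in (1).

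For (2), the main observation I want to exploit is the graded-commutative product of two twisted differentials in the DG commutative algebra $A\otimes A$. Using the Koszul rule $(x\otimes y)(x'\otimes y') = (-1)^{|y||x'|}\,xx'\otimes yy'$, a direct computation gives
\[
(d_\sigma a)(d_\sigma b) = \sigma(a)\sigma(b)\otimes 1 - \sigma(a)\otimes b - (-1)^{|a||b|}\,\sigma(b)\otimes a + 1\otimes ab.
\]
On the other hand, expanding $\sigma(a)\cdot d_\sigma b - (-1)^{|a||b|}\,d_\sigma b\cdot a$ in $A\otimes A$ and using the graded commutativity $ba = (-1)^{|a||b|}ab$ of $A$ produces exactly the same four terms. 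Hence $\sigma(a)\cdot d_\sigma b - (-1)^{|a||b|}\,d_\sigma b \cdot a = (d_\sigma a)(d_\sigma b)$, which lies in $J\cdot J = J^2$ because each factor belongs to $J$. This establishes (2) on the generators $u = d_\sigma b$.

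To upgrade this to arbitrary $u \in \Omega^1_\sigma A$, I would use bilinearity together with the graded commutativity of $A$: writing $u = c_1 \cdot v \cdot c_2$ for a generator $v$ and $c_i \in A$, one pushes $\sigma(a)$ past $c_1$ via $\sigma(a)c_1 = (-1)^{|a||c_1|}c_1\sigma(a)$ and symmetrically uses $c_2 a = (-1)^{|a||c_2|} a c_2$; the accumulated Koszul signs combine into exactly $(-1)^{|a||u|}$. The whole argument is routine, and the only place requiring care is the bookkeeping of Koszul signs in the graded-commutative multiplication on $A\otimes A$, which is precisely the mechanism producing the sign $(-1)^{|a||u|}$ that appears in the statement of (2).
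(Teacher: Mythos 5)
Your argument is correct. Part (1) is verbatim the paper's computation. For part (2) the underlying mechanism is also the same in both proofs: the discrepancy $\sigma(a)\cdot u-(-1)^{|a||u|}u\cdot a$ is recognized as the product, inside the graded-commutative algebra $A\otimes A$, of the element $\sigma(a)\otimes 1-1\otimes a\in J$ with a representative of $u$ in $J$, hence lies in $J^2$. The difference is purely organizational: you verify this first for $u=d_\sigma b$ (where the product is $(d_\sigma a)(d_\sigma b)$) and then extend by $A$-bilinearity, whereas the paper applies the observation directly to an arbitrary representative $\sum_i u_i\otimes v_i\in J$, for which
$\sigma(a)\cdot\big(\sum_i u_i\otimes v_i\big)-(-1)^{|a||u|}\big(\sum_i u_i\otimes v_i\big)\cdot a=\big(\sigma(a)\otimes 1-1\otimes a\big)\cdot\big(\sum_i u_i\otimes v_i\big)\in J^2$,
the sign $(-1)^{|a||u|}$ being absorbed exactly by commuting $1\otimes a$ past $u_i$ and then past $v_i$ via graded commutativity. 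The paper's version is slightly cleaner because your reduction to generators tacitly uses that $J$ is generated as an $A$-bimodule by the elements $d_\sigma b$; this is true (if $\sum_i x_i\sigma(y_i)=0$ then $\sum_i x_i\otimes y_i=-\sum_i (x_i\otimes 1)\,d_\sigma(y_i)$), but it is an extra step you should record explicitly, and the direct computation on arbitrary elements of $J$ makes it unnecessary.
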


\begin{proof}
(1) For any $a,b\in A$, we have
\begin{align*}
&d_\sigma(a)\cdot b+ \sigma(a)\cdot d_\sigma(b)\\
&= \big(\sigma(a)\otimes 1-1\otimes a\big)
\cdot b+\sigma(a)\cdot \big( \sigma(b)\otimes 1-1\otimes  b\big)\\
&= \big(\sigma(a)\otimes b-1\otimes ab\big)
+ \big(\sigma(ab)\otimes 1- \sigma(a)\otimes b\big)\\
&= \sigma(ab) \otimes 1-1\otimes ab\\
&=d_\sigma(ab).
\end{align*}

(2) We only need to show that, for any $u\in J/J^2$ and $a\in A$,
if $u$ is represented by
$\sum_i u_i\otimes v_i\in J$, then
$$\sigma(a)\cdot\big(\sum_i u_i\otimes v_i\big)-(-1)^{(|u_i|+|v_i|)|a|}
\big(\sum_i u_i\otimes v_i\big)\cdot a\in J^2.$$
In fact,
\begin{align*}
&\sigma(a)\cdot\big(\sum_i u_i\otimes v_i\big)-(-1)^{(|u_i|+|v_i|)|a|}
\big(\sum_i u_i\otimes v_i\big)\cdot a\\
&=\big(\sigma(a)\otimes 1\big)\cdot \big(\sum_i u_i\otimes v_i\big)
-\big(1\otimes a\big) \cdot\big(\sum_i u_i\otimes v_i\big)\\
&=\big(\sigma(a)\otimes 1-1\otimes a\big)\cdot
\big(\sum_i u_i\otimes v_i\big)\in J^2.\qedhere\end{align*}
\end{proof}

By this lemma, if we define the left and right action of $A$ on $\Omega^1_\sigma A$ by
$$
x\circ u\circ y:=\sigma(x)uy,\quad\mbox{for all}\, u\in\Omega^1_\sigma A\;\mbox{and}\; x, y\in A,
$$
then
$\Omega^1_\sigma A$ is a DG $A$-module.
Moreover, it is direct to see the following map
\begin{equation}\label{eq:defofpsi}
\Psi: \Omega^1 A\mapsto \Omega^1_\sigma A,\, dx
\mapsto d_\sigma(x) 
\end{equation}
is an isomorphism of $A$-modules.

Parallel to the noncommutative case,
in what follows we prefer to call 
$\Omega^1 A[-1]$ and $\Omega^1_\sigma A[-1]$
the sets of 1-forms and twisted 1-forms of $A$ respectively. 
In this case, we have $d\circ\partial+\partial\circ d=0$
and  $d_\sigma\circ\partial+\partial\circ d_\sigma=0$,
and moreover, $d:x\mapsto dx$ and $d_\sigma:x\mapsto d_\sigma x$
satisfies the Leibniz and twisted Leibniz rule respectively.

The {\it twisted differential forms} of $A$,
denoted by $\Omega^\bullet_\sigma A$, is given as follows (see \cite{KSA}):
let $T_A(\Omega^1 A[-1])$ be 
the tensor algebra generated by $\Omega^1 A[-1]$
over $A$, with $\partial$ and $d$
extending to it by derivation and by letting $d^2=0$,
then $\Omega^\bullet_\sigma A$ is the quotient of
$T_A(\Omega^1 A[-1])$  by the two-sided DG ideal
generated by $x\cdot y-(-1)^{|x||y|}y\cdot\sigma(x)$,
for all $x\in A$ and $y\in\Omega^1 A[-1]$, or both 
$x, y\in\Omega^1 A[-1]$. Observe that the degree one part
of $\Omega^\bullet_\sigma A$ is exactly $\Omega^1_\sigma A[-1]$
given above.

\subsection{Twisted symplectic structure}

Now suppose $\omega\in\Omega^2_\sigma A$, similarly to the noncommutative
case, we say it is $\sigma$-invariant
if $\sigma(\omega)=\omega$, where $\sigma$ is induced from $\sigma: A\to A$.
For such an $\omega$, suppose it is represented by $\sum da_1\otimes da_2$; then
we may define the following contraction
\begin{equation}\label{eq:defofphi}
\Psi\circ\iota_{(-)}\omega:\mathfrak X(A)\to\Omega^1_\sigma A[n-2],\, 
X\mapsto
\sum X(da_1)\Psi(da_2)
+X(da_2)\Psi(\sigma(da_1)),
\end{equation}
where $\mathfrak X(A)$
is identified with $\mathrm{Hom}_A(\Omega^1 A, A)$ and hence $\mathfrak X(da_1)$
is the evaluation, and $\Psi$ is given by \eqref{eq:defofpsi}.

\begin{lemma}
If $\omega$ is $\sigma$-invariant 
and extends to be closed, then \eqref{eq:defofphi} is a map of DG $A$-modules.
\end{lemma}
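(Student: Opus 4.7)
The plan is to run the commutative analogue of the proof of Lemma \ref{lemma:NCcontraction}. Write $\Phi := \Psi\circ\iota_{(-)}\omega$ and verify two separate properties: (i) $\Phi$ intertwines the natural $A$-module structure on $\mathfrak X(A)=\mathrm{Hom}_A(\Omega^1 A,A)$ with the twisted $A$-module structure on $\Omega^1_\sigma A[n-2]$ defined via $x\circ u\circ y=\sigma(x)uy$; (ii) $\Phi$ commutes (up to the appropriate Koszul sign) with the internal differentials induced by $\partial$ on $A$.

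For property (i), I would just evaluate $\Phi(aX)$ directly from the defining formula \eqref{eq:defofphi} using $(aX)(db)=a\,X(db)$. Comparing this with $a\circ\Phi(X)=\sigma(a)\cdot\Phi(X)$, the two expressions agree after rewriting one factor via $\Psi$, exactly as in the noncommutative computation leading to \eqref{eq:Rbimodulecompatibility}; no genuinely new ingredient beyond tracking the $\sigma$-twist is needed, and commutativity of $A$ makes the bookkeeping noticeably lighter than in the associative case.

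For property (ii), I would write $\omega=\sum da_1\cdot da_2$ and expand both $\partial\Phi(X)$ and $\Phi(\partial X)$. The difference collects precisely those terms in which $\partial$ hits $da_1$ or $da_2$ inside $\omega$, i.e.\ it equals $\Psi\circ\iota_X(\partial\omega)$ up to signs. Thus the chain-map property reduces to the vanishing of $\iota_X(\partial\omega)$ as an element of $\Omega^1_\sigma A$. Since $\omega$ is closed in the negative cyclic complex associated to $\Omega^\bullet_\sigma A$, its boundary $\partial\omega$ is already zero modulo the twisted commutator relations $xy=(-1)^{|x||y|}y\,\sigma(x)$, and the contraction $\iota_X$ factors through this quotient, so $\iota_X(\partial\omega)=0$ in $\Omega^1_\sigma A$.

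The delicate point, where $\sigma$-invariance is really used, is exactly parallel to the argument around \eqref{eq:twistedcyclicity}: the cancellations in $\partial\omega$ are not individual but happen only after a cyclic rotation twisted by $\sigma$, and so they are available inside the contraction only when $\omega$ itself is $\sigma$-invariant (so that rotating a factor across the other and applying $\sigma$ reproduces the same monomial). I expect this to be the main, though ultimately routine, obstacle; once the twisted cyclic cancellation is established, assembling it with (i) gives the claim. The overall argument should be a strictly simpler version of the noncommutative Lemma \ref{lemma:NCcontraction}, since in the commutative setting one avoids the bookkeeping for the inner bimodule structure \eqref{eq:innerbimodulestr}.
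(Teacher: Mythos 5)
Your proposal is correct and follows exactly the route the paper intends: the paper's own proof of this lemma is the single sentence ``This is completely analogous to Lemma \ref{lemma:NCcontraction},'' and your two-step verification (module-map compatibility plus the chain-map property via twisted cyclic cancellation of $\partial\omega$ in the quotient) is precisely the commutative transcription of that noncommutative argument.
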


\begin{proof}
This is completely analogous to Lemma \ref{lemma:NCcontraction}.
\end{proof}

The following definition is completely parallel to Definition \ref{def:twistedbisymplecticstru};
if $\sigma$ is the identity map,  
then it coincides with the one of Pantev et. al. \cite{PTVV} in the DG affine case.

\begin{definition}[Twisted symplectic structure]
Let $A$ be a DG free commutative algebra with $\sigma\in\mathrm{Aut}\, A$.
A twisted 2-form $\omega\in\Omega^2_\sigma A$ of total degree $n$
is called a {\it twisted $(2-n)$-shifted
symplectic structure}
if 
\begin{enumerate}
\item it extends to be closed in the negative cyclic complex associated to
$\Omega^2_\sigma  A$;
\item it is $\sigma$-invariant and induces a quasi-isomorphism
$$
\Psi\circ\mathrm{\iota}_{(-)}\omega: \mathfrak X(A)[1]\to\Omega^1_\sigma  A[-1][n].
$$
of DG $A$-modules.
\end{enumerate}
If $A$ is a DG free resolution of a twisted DG algebra $B$, then we say $B$ has a {\it derived}
twisted symplectic structure if $A$ has a twisted symplectic structure.
\end{definition}

The above definition can be generalized to the global quotients
of derived affine schemes, viewed as a derived stack,
without any difficulty.

Now suppose $A$ is a Koszul AS-regular algebra of dimension $n$.
Let $R=\Omega(A^{\textup{!`}})$ be the cobar construction of
its Koszul dual coalgebra $A^{\textup{!`}}$.
Let $x^0={\bf 1}, x^1,\cdots, x^\ell$ be a basis of $A^{\textup{!`}}$.
By Example \ref{ex:DRepofquasifree}, for $V=k^d$,
$\mathrm{DRep}_V(A)$ is isomorphic to
$$
R_V=k[x^{\alpha}_{ij}|1\le \alpha\le \ell,1\le i,j\le d, |x^\alpha_{ij}|=|x^{\alpha}|-1].
$$

\begin{proof}[Proof of Theorem \ref{thm:secondmain}]
Denote 
$
\mathbb T=(\widehat{ \mathbf D\mathrm{er}}\, R[1])_V
$ and 
$
\mathbb L=(\widehat{\mathbf{\Omega}}^1 R[-1])_V$.
Then by Theorem \ref{thm:tangentandcotangentonDRep},
$\mathbb T^{\mathrm{GL}(V)}$ and $\mathbb L^{\mathrm{GL}(V)}$
are the tangent and cotangent complexes of
$\mathpzc{DRep}_V(A)$ respectively.
By the same argument, it is also direct to see that
the $\mathrm{GL}(V)$-invariant of
$\mathbb L_\sigma:=(\widehat{\mathbf{\Omega}}^1_\sigma R[-1])_V$,
denoted by $\mathbb L_\sigma^{\mathrm{GL}(V)}$,
is the twisted cotangent complex of $\mathpzc{DRep}_V(A)$.
Thus by applying Van den Bergh's functor
to Theorem \ref{thm:firstmain}, we get Theorem \ref{thm:secondmain}.
Since the computations are quite involved, we here give some details of the proof.

First of all,
recall that
in \eqref{eq:augmentedncform}--\eqref{eq:idoftwocotangentmodels}
we have identified
$$(R\otimes R)_V\cong
(R\otimes d\mathbf 1\otimes R)_V\cong
R_V\otimes\mathrm{End}\, V\cong R_V\otimes\mathfrak{gl}(V).
$$
As before, we write $\{x^0_{ij}\}$ to be the set of generators of $R_V\otimes\mathrm{End}\, V$
(the corresponding elementary matrices).
Let
$$
\omega:=\frac{1}{2}
\sum_{i,j=1}^d d \eta'_{ij}\otimes d \eta''_{ji},
$$
which is exactly the image of 
$\bm{\omega}=\frac{1}{2}\sum d \eta'\otimes d \eta''
\in\widehat{\mathrm{DR}}^2_\sigma R$ 
under
the derived Procesi map $\mathrm{Tr}_V$ described in 
Example \ref{traceofforms}.
It is straightforward to see that $\omega$ is: 1) $\sigma$-invariant of total degree $n$,
and 2) closed under both $d$ and $\partial$.

Now by \eqref{eq:defofphi} the contraction with $\omega$ is given by
\begin{equation}\label{eq:twistedqisonDRep}
\Psi\circ\iota_{(-)}\omega: \mathbb T\to \mathbb L_\sigma[n],\,
D{\tilde x^\alpha_{ij}}
\mapsto\sum_{\beta=0}^{\ell}\langle \tilde x^{\alpha},\tilde x^{\beta}\rangle d_\sigma(x^\beta_{ij}),
\end{equation}
which extends to a map of $R_V$-modules.
Here $
D\tilde x^{\alpha}_{ij}$ is $(\bm D\tilde x^{\alpha})_{ij}$, whose linear dual is $dx^{\alpha}_{ji}$.
We claim that
\eqref{eq:twistedqisonDRep}
is a quasi-isomorphism of chain complexes.
First,
denote $\delta: \mathbb T\to \mathbb T$ to be the
internal differential on $\mathbb T$
via
$\delta(D\tilde x^{\alpha}_{ij}):=(\delta(\bm D\tilde x^{\alpha}))_{ij}$,
where $\delta$ in the right hand side is given by
\eqref{eq:internaldiffonDer}.
More precisely, we have
\begin{equation*}
\delta(D\tilde x^{\alpha}_{ij})=-\sum_{\gamma}\sum_k 
(-1)^{|\alpha|+|\gamma|-1}D(\tilde x^{\gamma}\tilde x^{\alpha})_{ik}\cdot x^{\gamma}_{kj} 
-x^{\gamma}_{ik}\cdot D(\tilde x^{\alpha}\tilde x^{\gamma})_{kj}.
\end{equation*}
Here, for simplicity, we write $(-1)^{|x^\alpha|}$ as $(-1)^{|\alpha|}$.
Denote $\Phi:=\Psi\circ\iota_{(-)}\omega$; we have
\begin{align}
&\Phi\big(\delta(D{\tilde x^{\alpha}_{ij}})\big)\nonumber\\
&
{=}\Phi\big(-\sum_{\gamma}\sum_k 
(-1)^{|\alpha|+|\gamma|-1}D(\tilde x^{\gamma}\tilde x^{\alpha})_{ik}\cdot x^{\gamma}_{kj} 
-x^{\gamma}_{ik}\cdot D(\tilde x^{\alpha}\tilde x^{\gamma})_{kj}\big)\nonumber\\
&
{=}
-\sum_{\gamma}\sum_{\beta} \sum_{k}
(-1)^{|\alpha||\gamma|+|\beta|(|\gamma|-1)}
\langle \tilde x^{\gamma} \tilde x^{\alpha},\tilde x^{\beta}\rangle
d_\sigma({x^{\beta}_{ik}})\cdot\sigma(x^{\gamma}_{kj}) -\langle \tilde x^{\alpha}\tilde x^{\gamma},\tilde x^{\beta}\rangle 
x^{\gamma}_{ik}\cdot d_\sigma({x^{\beta}_{kj}})\nonumber\\
&=-\sum_{\gamma}\sum_{\beta} \sum_{k}(-1)^{|\beta|}
\langle\tilde x^{\alpha}, \tilde x^{\beta}\sigma^*(\tilde x^{\gamma})\rangle
d_\sigma({x^{\beta}_{ik}})\cdot\sigma(x^{\gamma}_{kj})
-\langle \tilde x^{\alpha}, \tilde x^{\gamma}\tilde x^{\beta})\rangle x^{\gamma}_{ik}\cdot 
d_\sigma({x^{\beta}_{kj}})\nonumber\\
&= -\sum_{\zeta}\sum_{k}\langle \tilde x^{\alpha},\tilde x^{\zeta}\rangle\big( 
(-1)^{|{\zeta'}|} d_\sigma({x^{\zeta'}_{ik}})\cdot  \sigma(x^{\zeta''}_{kj})
-x^{\zeta'}_{ik}\cdot d_\sigma({x^{\zeta''}_{kj}})\big)\nonumber\\
&=- \sum_{\zeta}\sum_{k} 
\langle \tilde x^{\alpha}, \tilde x^{\zeta}\rangle\big((-1)^{|{\zeta'}||{\zeta''}|}x^{\zeta''}_{kj} \cdot d_\sigma({x^{\zeta'}_{ik}})
-x^{\zeta'}_{ik}\cdot d_\sigma({x^{\zeta''}_{kj}})\big),\label{eq:Xipartial}
\end{align}
while
\begin{eqnarray}
\partial(\Phi(D{\tilde x^{\alpha}_{ij}}))&=&\sum_{\beta}\langle \tilde x^\alpha, \tilde x^\beta\rangle\cdot
\partial (d_\sigma(x^\beta_{ij}))=-\sum_{\beta}\langle\tilde x^\alpha, \tilde x^\beta\rangle\cdot d_\sigma
\circ\partial(x^\beta_{ij})\nonumber\\
&=&-\sum_{\beta}\langle \tilde x^\alpha,\tilde x^\beta\rangle\cdot d_\sigma\big(\sum_k (-1)^{|{\beta'}|}
x^{\beta'}_{ik}x^{\beta''}_{kj}\big)\nonumber\\
&=&-\sum_{\beta}
\sum_k\langle \tilde x^\alpha, \tilde x^\beta\rangle \big(   
(-1)^{|{\beta'}||{\beta''}|}x^{\beta''}_{kj} d_\sigma(x^{\beta'}_{ik})
-x^{\beta'}_{ik}d_\sigma(x^{\beta''}_{kj})\big).
\label{eq:partialXi}
\end{eqnarray}
Observe that \eqref{eq:Xipartial} and \eqref{eq:partialXi} are equal to each other,
from which we obtain that $\Phi$ is a chain map.

We now show $\Phi$ is non-degenerate, that is, it is a quasi-isomorphism of DG $R_V$-modules.
In fact, this is a direct corollary of the non-degeneracy of the pairing on $A^{\textup{!`}}$
and the non-degeneracy of the canonical pairing on matrices (namely, the trace of the matrix product).

Taking the $\mathrm{GL}$-invariants
of \eqref{eq:twistedqisonDRep} gives the desired quasi-isomorphism from the tangent complex
to the twisted cotangent complex on $\mathpzc{DRep}_V(A)$.
This means $\omega$ given above is a twisted symplectic form, from
which the theorem follows.
\end{proof}

\end{document}